\theoremstyle{plain}
\newtheorem{thm}{Theorem}[section]
\newtheorem{prop}[thm]{Proposition}
\newtheorem{lem}[thm]{Lemma}
\theoremstyle{definition}
\newtheorem{defn}{Definition}
\theoremstyle{remark}
\newtheorem{remark}{Remark}
\newtheorem{notation}{Notation}
  \def\C{{\mathbb{C}}}           \def\N{{\mathbb{N}}}    \def\R{{\mathbb{R}}}        
\def\bfa{{\bf{a}}}                         
 \def\cB{{\mathcal{B}}} \def\cC{{\mathcal{C}}}            \def\cO{{\mathcal{O}}} \def\cP{{\mathcal{P}}}   \def\cS{{\mathcal{S}}}      \def\cY{{\mathcal{Y}}} 
            \def\tM{{\widetilde{M}}}        \def\tU{{\widetilde{U}}}     
           \def\tSigma{{\widetilde{\Sigma}}}  
 \def\tdelta{{\widetilde{\delta}}}               
\newcommand{\G}{\Gamma}
\newcommand{\Si}{\Sigma}
\newcommand{\tSi}{\widetilde{\Sigma}}
\newcommand{\eps}{\epsilon}
\renewcommand\a{\alpha}
\renewcommand\b{\beta}
\renewcommand\d{\delta}
\newcommand\g{\gamma}
\renewcommand\k{\kappa}
\newcommand\avg{\operatorname{avg}}
\newcommand\bvol{\overline{\operatorname{vol}}}
\newcommand\cov{\operatorname{Cov}}
\newcommand\dom{\operatorname{dom}}
\newcommand\Haar{\mathrm{Haar}}
\newcommand\Map{{\operatorname{Map}}}
\newcommand\ovol{\operatorname{vol}}
\newcommand\Prob{\operatorname{Prob}}
\newcommand\rng{\operatorname{rng}}
\newcommand\Stab{\operatorname{Stab}}
\newcommand\sep{\operatorname{Sep}}
\newcommand\Span{\operatorname{Span}}
\newcommand\vol{\mathrm{vol}}
\def\cc{{\curvearrowright}}
\newcommand{\resto}{\upharpoonright}
  \newcommand{\dee}{\mathrm{d}}
\begin{document}
\title{Locally compact sofic entropy theory}
\author{Lewis Bowen\footnote{supported in part by NSF grant DMS-2154680}\\ University of Texas at Austin}
\maketitle

\begin{abstract}
This paper generalizes sofic entropy theory, in both the topological and measure-theory settings, to actions of locally compact groups. We prove invariance under topological and measure conjugacy of these entropies and establish the variational principle.
\end{abstract}

\noindent
{\bf Keywords}: entropy, sofic groups, locally compact groups\\
{\bf MSC}:37A35\\

\noindent
\tableofcontents

\section{Introduction}

The entropy of a measure-preserving transformation $T:X \to X$ on a probability space $(X,\cB,\mu)$ with respect to a measurable map $\phi:X \to A$ (where $A$ is finite or countable) is defined as follows. First we consider the map $\phi^n:X \to A^n$
$$\phi^n(x) = (\phi(x), \phi(Tx),\ldots, \phi(T^{n-1}x)).$$ 
Then pushforward the measure $\mu$ to obtain a measure $\phi^n_*\mu$ on $A^n$. The Shannon entropy of this measure is defined by
$$H(\phi^n_*\mu) = \sum_{\bfa \in A^n} - \phi^n_*\mu(\bfa) \log (\phi^n_*\mu(\bfa) ).$$
Finally, the entropy rate of $T$ with respect to $\mu$ and $\phi$ is 
$$h_\mu(T,\phi) = \lim_{n\to\infty} n^{-1}H(\phi^n_*\mu).$$
For example, if $T$ represents time and $\phi$ represents a measurement on the system then $H(\phi^n_*\mu)$ is the expected amount of information gained from the measurements up to time $n$. So the entropy rate is the expected information gain per unit time.

The entropy of $T$, denoted $h_\mu(T)$, is the supremum of $h_\mu(T,\phi)$ over all observables $\phi$ with finite range. It is a measure-conjugacy invariant and is crucial for classification purposes. In fact, Kolmogorov introduced entropy in order to prove the existence of Bernoulli shifts which are not measurably conjugate (because they have different entropies) \cite{kolmogorov-1958, kolmogorov-1959}. Sinai proved that, if $T$ is ergodic, then $T$ factors onto a Bernoulli shift with the same entropy \cite{sinai-weak} and Ornstein proved that two Bernoulli shifts are measurably conjugate if and only if they have the same entropy \cite{ornstein-1970a}.

Since these classical results were obtained, entropy theory has been generalized to actions of countable amenable groups \cite{kieffer-1975a, OW80}, $\R^n$ \cite{MR597458}, locally compact amenable groups \cite{OW87, avni-2010} and countable sofic groups \cite{bowen-jams-2010, kerr-li-variational, MR3616077, MR4138907}. The goal of this paper is to generalize entropy theory to actions of locally compact sofic groups. 

\subsection{Sofic groups and the residually finite case}
A countable group $\G$ is called {\bf sofic} if it admits a sofic approximation, which is a sequence of partial actions on finite sets which locally approximates the action of $\G$ on itself by left-translations \cite{MR3616077, pestov-sofic-survey, capraro-lupini}. To illustrate, suppose $\G$ is residually finite. This means there exist normal finite-index subgroups $N_1 \ge N_2 \ge \cdots$ in $\G$ such that $\cap_i N_i = \{1_\G\}$. The sequence $\Si$ of actions $\G \cc \G/N_i$  is a special kind of {\bf sofic approximation} to $\G$ called a {\bf transitive sofic approximation} or a {\bf sofic approximation by homomorphisms}.  To motivate our approach, we first review sofic entropy of $\G$-actions with respect to such a sequence. 

Let $(X,\rho_X)$ be a compact metric space on which $\G$ acts by homeomorphisms. An orbit $\G x \subset X$ is {\bf periodic} if the stabilizer of $x$, $\Stab_\G(x)=\{g \in \G:~gx = x\}$ has finite index in $\G$. We can also think of a periodic orbit as the image of a $\G$-equivariant map $\phi:\G/H \to X$ for some finite index subgroup $H$. 

While the periodic orbits are certainly of interest, they are too restrictive for entropy purposes. Instead, we consider {\bf almost periodic orbits}. These are quantified by a finite subset $U \subset \G$ and an error tolerance $\d>0$. A map $\phi:\G/N_i \to X$ is {\bf $(\rho_X,U,\d)$-equivariant} if 
$$|\G/N_i|^{-1} \sum_{hN_i \in \G/N_i} \rho_X(\phi(g hN_i), g \phi(hN_i)) < \d$$
for all $g \in U$. Let $\Map(\G/N_i, X, \rho_X: U,\d)$ be the set of all $(\rho_X,U,\d)$-equivariant maps of $\G/N_i$ into $X$. 

In order to quantify the `size' of $\Map(\G/N_i, X, \rho_X: U,\d)$, we introduce a metric on it. In general, if $\phi, \psi$ are maps from $\G/N_i$ to $X$ then we set
$$\rho_X^{\G/N_i}(\phi,\psi) = |\G/N_i|^{-1} \sum_{hN_i \in \G/N_i} \rho_X(\phi(hN_i),  \psi(hN_i)).$$
Let $\sep_\eps( \Map(\G/N_i, X, \rho_X: U,\d))$ denote the maximum cardinality of a $(\rho_X^{\G/N_i},\eps)$-separated subset of $ \Map(\G/N_i, X, \rho_X: U,\d)$. 

Finally, we can define topological $\Si$-entropy by
$$h_\Si(\G,X) =\lim_{\eps \searrow 0} \inf_{U\subset \G} \inf_{\d>0} \limsup_{i\to\infty} |\G/N_i|^{-1} \log \sep_\eps(\Map(\G/N_i, X, \rho_X : U,\d))$$
where the first infimum is over all finite subsets $U \subset \G$. 

To define measure entropy, let $\Prob(X)$ denote the space of Borel probability measures on $X$ endowed with the weak* topology. This is the smallest topology such that for every continuous $f:X \to \R$, the map $\mu \mapsto \int f~\dee\mu$ is continuous on $\Prob(X)$. Given an open set $\cO \subset \Prob(X)$, let $\Map(\G/N_i, X, \rho_X: U,\d,\cO)$ be the set of $\phi \in \Map(\G/N_i, X, \rho_X: U,\d)$ such that if $u_i$ denotes the uniform probability measure on $\G/N_i$ then $\phi_*u_i \in \cO$ where $\phi_*u_i$ is called the {\bf empirical measure} of $\phi$. 

Let $\mu$ be a $\G$-invariant probability measure on $X$. The measure $\Si$-entropy of the action $(\G,X,\mu)$ is defined by
$$h_\Si(\G,X,\mu) =\lim_{\eps \searrow 0} \inf_{\cO \ni \mu} \inf_{U\subset \G} \inf_{\d>0} \limsup_{i\to\infty} |\G/N_i|^{-1} \log \sep_\eps(\Map(\G/N_i, X, \rho : U,\d, \cO))$$
where $\cO$ varies over all open neighborhoods of $\mu$ in $\Prob(X)$ and, as above, $U$ varies over finite subsets of $\G$.

The approach to entropy theory through $\eps$-separating numbers of partial orbits was first introduced by Rufus Bowen \cite{MR0274707}. It was introduced into the sofic context by Kerr and Li \cite{kerr-li-variational}. 

Some of the main results about these entropies are: topological $\Si$-entropy is a topological conjugacy invariant while measure $\Si$-entropy is a measure conjugacy invariant. The variational principle holds: topological $\Si$-entropy is the supremum of $h_\Si(\G,X,\mu)$ over all $\G$-invariant measures $\mu$. Moreover the entropies of Bernoulli shifts, Gaussian actions and many algebraic actions have been computed \cite{bowen-jams-2010, MR2813530, MR3693125, hayes-fk-determinants}. These entropies agree with their classical counterparts when $\G$ is amenable \cite{kerr-li-sofic-amenable}. See \cite{MR3616077, MR4138907} for more results.

\subsection{Locally compact sofic groups}

A locally compact group $G$ is {\bf sofic} if it admits a {\bf sofic approximation} \cite{MR4555895}. Roughly speaking, a sofic approximation is a sequence of partial actions of $G$ on topological spaces which locally approximates the action of $G$ on itself by left-translations. This is reviewed in \S \ref{S:lcsg} below. For example, suppose $G$ admits a lattice\footnote{A subgroup $\G$ is a {\bf lattice} if it is discrete in $G$ and there is a subset $F \subset G$ with finite left-Haar measure such that $G = \G F$.} subgroup $\G$ which is residually finite. Let $\{\G \cc \G/N_i\}_{i=1}^\infty$ be a sofic approximation to $\G$ as in the previous subsection. Then the sequence $\tSigma=\{G \cc G/N_i\}_{i=1}^\infty$ of homogeneous actions is a sofic approximation to $G$.

For the sake of motivation, we describe sofic entropy with respect to $\tSigma$; the general case is explained in the paper. Actually, the definitions are straightforward generalizations of the countable case. Suppose $G \cc (X,\rho_X)$ is an action on a compact metric space. Given a pre-compact open set $U \subset G$ and an error tolerance $\d>0$, a measurable map $\phi:G/N_i \to X$ is said to be {\bf $(\rho_X,U,\d)$-equivariant} if 
$$\int_{G/N_i} \rho_X(\phi (gp), g\phi(p))~\dee\bvol(p) < \d$$
where $\bvol$ is the unique $G$-invariant Borel probability measure on $G/N_i$. If $\psi: G/N_i \to X$ is another measurable map then define its distance to $\phi$ by
$$\rho^{G/N_i}_X(\phi,\psi) = \int_{G/N_i} \rho_X(\phi (p), \psi(p))~\dee\bvol(p).$$
Finally, define the topological $\tSi$-entropy by
$$h_{\tSi}(G,X) =\lim_{\eps \searrow 0} \inf_{U\subset G} \inf_{\d>0} \limsup_{i\to\infty} \vol(G/N_i)^{-1} \log \sep_\eps(\Map(G/N_i, X, \rho_X : U,\d))$$
where 
\begin{enumerate}
\item $\Map(G/N_i, X, \rho_X : U,\d)$ is the set of $(\rho_X,U,\d)$-equivariant maps;
\item $\sep_\eps( \Map(G/N_i, X, \rho_X : U,\d))$ is the maximum cardinality of a $(\rho_X^{G/N_i}, \eps)$-separated subset of  $\Map(G/N_i, X, \rho_X : U,\d)$;
\item  the first infimum is over all pre-compact open subsets $U \subset G$;
\item $\vol(G/N_i)$ is the volume of $G/N_i$ with respect to a fixed left-invariant Haar measure on $G$.
\end{enumerate}

Similarly, if $\cO \subset \Prob(X)$ is an open set, then we let $\Map(G/N_i, X, \rho_X : U,\d,\cO)$ be the set of all $\phi \in \Map(G/N_i, X, \rho_X : U,\d)$ such that $\phi_*\bvol \in \cO$ where $\phi_*\bvol(E) = \bvol(\phi^{-1}(E))$ is the pushforward measure. 

Let $\mu$ be a $G$-invariant probability measure on $X$. The measure $\tSi$-entropy of the action $(G,X,\mu)$ is defined by
$$h_{\tSi}(G,X,\mu) =\lim_{\eps \searrow 0} \inf_\cO \inf_{U\subset G} \inf_{\d>0} \limsup_{i\to\infty} \vol(G/N_i)^{-1} \log \sep_\eps(\Map(G/N_i, X, \rho : U,\d, \cO))$$
where $\cO$ varies over all open neighborhoods of $\mu$ in $\Prob(X)$.

Our main results are: topological $\tSi$-entropy is invariant under topological conjugacy, measure $\tSi$-entropy is invariant under measure conjugacy and the variational principle holds. 
 
In the paper, we work in a more general setting. First, we do not require that $X$ is compact. The reason for this is that there are many actions of locally compact groups,  such as the action on the space of point measures, where the action space $X$ is not compact but the action map $G \times X \to X$ given by $(g,x) \mapsto gx$ is uniformly continuous. Moreover, the arguments do not require compactness as long as $X$ admits a Polish metric with respect to which the action is uniformly continuous. 

Second, we do not require that $\rho_X$ is a metric on $X$. Instead, we assume $\rho_X$ is a bounded, uniformly continuous, pseudo-metric which is {\bf dynamically generating}. The latter means that if $x, y \in X$ and $x \ne y$ then there exists $g \in G$ with $\rho_X(gx,gy)>0$. 
 
\subsection{About the proofs}
The rough outlines of the proofs are similar to the countable case (as in  \cite{MR3616077} or as outlined in \cite{ MR4138907}), however there are some significant differences highlighted here in the special case of homogeneous sofic approximations.

In the case of topological entropy, we say that a measurable map $\phi:G/N_i \to X$ is {\bf $(\rho_X,U,\d)$-equivariant on average} if 
$$\int_U \int_{G/N_i} \rho_X(\phi (gp), g\phi(p))~\dee\bvol(p) \dee \Haar(g)< \d \Haar(U)$$
where $\Haar$ is a fixed left-Haar measure on $G$. Let $\Map_{\avg}(G/N_i, X, \rho_X : U,\d)$ be the set of such maps. We prove that topological $\tSi$-entropy does not change if we replace $\Map(\cdot)$ with $\Map_{\avg}(\cdot)$. The benefit is that it is easier to construct maps which satisfy this weaker equivariance and this helps establish lower bounds on topological $\tSi$-entropy.

In the case of measure entropy, we say a map $\phi:G/N_i \to X$ is {\bf measure-preserving} if $\phi_*\bvol = \mu$. Let $\Map_{\textrm{mp}}(G/N_i, X, \rho_X : U,\d) \subset \Map(G/N_i, X, \rho_X : U,\d)$ be the subset of such maps. We prove that measure $\tSi$-entropy does not change if we replace $\Map(\cdot)$ with $\Map_{\textrm{mp}}(\cdot)$, when the sofic approximation has no atoms. This helps establish upper bounds on measure $\tSi$-entropy.

Our proof of the Variational Principle is streamlined. It appears to be shorter than previous proofs even in the case when the acting group is countable.

\subsection{Organization}

\begin{itemize}
\item \S \ref{S:lcsg} reviews sofic approximations to locally compact groups;
\item \S \ref{S:pttet} goes over preliminary concepts such as the $\eps$-separated number, and induced pseudo-metrics on spaces of maps needed in \S \ref{S:tse};
\item \S \ref{S:tse} proves topological $\Si$-entropy is invariant under topological conjugacy;
\item \S \ref{S:ptmet} reviews the weak* topology and measure-conjugacy for use in \S \ref{S:mse};
\item \S \ref{S:mse} proves measure $\Si$-entropy is invariant under measure conjugacy;
\item \S \ref{S:vp} establishes the Variational Principle;
\item \S \ref{S:oq} is a list of open problems;
\item \S \ref{S:ni} is a notation index.
 \end{itemize}

Future parts of this work are planned which will contain a variety of tools for estimating sofic entropy and specific examples of entropy computations.




{\bf Acknowledgements}. It is a pleasure to thank Benjy Weiss for several enlightening email exchanges and to Jean-Paul Thouvenot for an inspiring question (which, hopefully, will be answered in future work). Locally compact sofic entropy theory formed the bulk of Sukhpreet Singh's unpublished PhD thesis (under my direction) and I am grateful for the many hours we spent together working on the theory. The definitions in this paper are different from those in his thesis because, with Peter Burton, we re-worked the concept of a sofic approximation to a locally compact group. However, the resulting notion of entropy is equivalent.

\section{Locally compact sofic groups}\label{S:lcsg}
This section recalls definitions and notation from \cite{MR4555895} regarding locally compact sofic groups. Proofs are in \cite{MR4555895}. 

\subsection{Local $G$-spaces} \label{subsec.localgspace}
We use the abbreviation lcsc to mean locally compact second countable. Let $G$ be an lcsc group.

\begin{defn}\label{D:partial}
A {\bf partial left-action} of $G$ on a Hausdorff space $M$ is a continuous map $\a:\dom(\a) \to M$ where $\dom(\a) \subset G\times M$ is open.  We require the following axioms hold for all $p\in M$.
\begin{enumerate}
\item[Axiom 1.] $(1_G,p) \in \dom(\a)$ and $\a(1_G,p) = p$. \label{D:partial-identity}
\item[Axiom 2.] If $(g,p)\in \dom(\a)$ then $(g^{-1}, \a(g,p)) \in \dom(\a)$ and $\a(g^{-1}, \a(g,p))  = p$.  \label{D:partial-inverse}
\item[Axiom 3.] If $(h,p), (g,\a(h,p)), (gh,p) \in \dom(\a)$ then $\a(gh,p)=\a(g,\a(h,p))$. \label{D:partial-mult}
\end{enumerate}
A partial action $\a$ is {\bf homogeneous} if in addition it satisfies the following.
\begin{enumerate}
\item[Axiom 4.] \label{D:partial-homeo} For every $p\in M$ there is an open neighborhood $O_p$ of  $1_G$ in $G$ such that $O_p \times \{p\}  \subset \dom(\a)$ and the  
restriction of $\a(\cdot, p)$ to $O_p \times \{p\}$ is a homeomorphism onto an open neighborhood of $p$ in $M$. 
\end{enumerate}

\end{defn}

\begin{remark}
The paper \cite{MR4555895} works with partial right-actions instead of left-actions. It is straightforward to switch from right-actions to left-actions. Since it is more convenient to work with left-actions in this paper, we do so. 
\end{remark}

\begin{defn}
A {\bf local left-$G$-space} is a pair $(M,\a)$ where $M$ is an lcsc space and $\a$ is a partial homogeneous left-action. 
\end{defn}

\begin{notation}
Because all of the actions in this paper are left actions, we will simply call a pair $(M,\a)$ as above a {\bf local $G$-space}. We will usually such a space by $M$, leaving the action $\a$ implicit. To simplify notation, we write $g.p= \a(g,p)$. If $K \subset M$, we write $g.K = \{ \a(g,k):~k \in K\}$. In particular, $g.K$ is well-defined if and only if $\{g\}\times K$ is in the domain of the action $\a$. Similarly, we write $O.p = \{\a(g,p):~g\in O\}$ if $O\times \{p\} \subset \dom(\a)$. 
\end{notation}

\begin{remark}
We write $g.h.p = g.(h.p)$. It is not necessarily true that $g_1.g_2.g_3.p = g_1g_2g_3.p$ even when both sides are well-defined. For counterexamples, see  \cite{MR4555895}.
\end{remark}

\begin{defn}
Let $(M,\a)$ be a local $G$-space and $p \in M$. A {\bf chart centered at $p$} is a homeomorphism $f_p:\dom(f_p) \to \rng(f_p)$ where $\dom(f_p) \subset M$ is an open neighborhood of $p$, $\rng(f_p)$ is an open neighborhood of the identity in $G$ and $g=f_p(g.p)$ for all $g \in \rng(f_p)$. In particular, we require that $g.p$ is well-defined for all $g\in \rng(f_p)$. By Axiom 4 of Definition \ref{D:partial}, for every $p \in M$ there exists a chart centered at $p$.  
\end{defn}

The next results are in  \cite{MR4555895}.
\begin{prop}[The canonical measure]\label{P:canonicalmeasure}
Let $(M,\a)$ be a local left-$G$-space. Fix a right-Haar measure $\Haar$ on $G$. Then there exists a unique Radon measure $\vol_M$ on $M$ satisfying the following. If $p \in M$, $f_p$ is a chart centered at $p$ and $K \subset \dom(f_p)$ is Borel then 
\begin{eqnarray}\label{E:vol}
 \vol_M(K) = \Haar(\{g \in \rng(f_p):~ g.p  \in K\}) = \Haar(f_p(K)).
 \end{eqnarray}
\end{prop}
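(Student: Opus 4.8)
The plan is to construct $\vol_M$ locally via charts, check that the local definitions agree on overlaps (so that they glue to a globally well-defined Borel measure), verify that the glued object is a Radon measure, and finally note uniqueness is immediate from formula \eqref{E:vol}. First I would fix a chart $f_p$ centered at each $p \in M$; by Axiom 4 of Definition \ref{D:partial} such a chart exists, with $\dom(f_p) \subset M$ an open neighborhood of $p$ and $\rng(f_p) \subset G$ an open neighborhood of $1_G$. On $\dom(f_p)$ I define a measure $\mu_p$ by $\mu_p(K) = \Haar(f_p(K))$ for Borel $K \subset \dom(f_p)$; since $f_p$ is a homeomorphism onto an open subset of $G$, this is a well-defined Radon measure on the open set $\dom(f_p)$. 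The family $\{\dom(f_p)\}_{p\in M}$ is an open cover of $M$, and $M$ is lcsc, hence Lindelöf and paracompact, so it suffices to show that for $p, q \in M$ the measures $\mu_p$ and $\mu_q$ agree on $\dom(f_p) \cap \dom(f_q)$; then a standard partition-of-unity / $\sigma$-compact exhaustion argument produces a unique Radon measure $\vol_M$ on $M$ restricting to $\mu_p$ on each $\dom(f_p)$.

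The heart of the matter — and the step I expect to be the main obstacle — is the compatibility on overlaps. Let $r \in \dom(f_p) \cap \dom(f_q)$. I would first reduce to a neighborhood of $r$ small enough that the relevant group elements compose correctly (so that Axiom 3 applies and the cocycle-type identities among $f_p$, $f_q$, and a chart $f_r$ centered at $r$ are valid); this is where the failure of associativity noted in the Remark after the Notation forces care, and the $O_p$-neighborhoods from Axiom 4 are exactly what let one shrink domains appropriately. On such a small neighborhood $W \ni r$, the transition map $f_q \circ f_p^{-1} : f_p(W) \to f_q(W)$ is a homeomorphism between open subsets of $G$, and unwinding the definitions (using $g = f_p(g.p)$ and the partial-action axioms) shows it is a \emph{left translation}: there is $\gamma \in G$, depending only on $p$, $q$ (not on the point), with $f_q(f_p^{-1}(g)) = \gamma g$ for $g$ in a neighborhood of $f_p(r)$. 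Explicitly, if $g.p = g'.q$ then $g' = (f_q(p)) (f_p(p))^{-1} g$ or a similar product — I would pin down the exact constant by evaluating at $r$. Right-invariance of $\Haar$ is, on its face, the wrong invariance for left translations; so the correct computation is that $f_q \circ f_p^{-1}$ is composition on the appropriate side, and one must verify that the relevant change is by the invariance side that $\Haar$ actually enjoys. [This is the crux: getting the left/right bookkeeping exactly right so that $\Haar(f_q(K)) = \Haar(f_p(K))$ genuinely holds. The partial action being a \emph{left} action while $\Haar$ is a \emph{right}-Haar measure is not an accident — it is what makes the two local pushforwards agree.] Once the transition maps are identified as translations of the invariant side, $\Haar(f_p(K)) = \Haar(f_q(K))$ follows, first for $K \subset W$, then for general Borel $K \subset \dom(f_p)\cap\dom(f_q)$ by covering with countably many such $W$'s and using $\sigma$-additivity.

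Having established overlap-compatibility, gluing is routine: choose a countable locally finite refinement $\{V_n\}$ of $\{\dom(f_p)\}$ subordinate to the cover (possible since $M$ is lcsc), define $\vol_M$ on Borel sets by a standard patching formula, and check it is independent of choices using the compatibility just proved. That $\vol_M$ is Radon (locally finite, inner regular on Borel sets, outer regular) is inherited from $\Haar$ being Radon together with local finiteness of the cover and $\sigma$-compactness of $M$. Finally, uniqueness: any Radon measure $\nu$ satisfying \eqref{E:vol} agrees with $\vol_M$ on every $\dom(f_p)$, these open sets cover $M$, and two Radon measures agreeing on every element of an open cover of an lcsc space are equal. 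I would close by remarking that the choice of right-Haar measure $\Haar$ is fixed once and for all, so $\vol_M$ is canonical relative to that choice, as the name suggests.
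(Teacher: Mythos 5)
The paper itself does not prove this proposition; it states it and defers to \cite{MR4555895} (``The next results are in \cite{MR4555895}''), so there is no in-paper argument to compare against. Your overall architecture --- define $\mu_p(K)=\Haar(f_p(K))$ chart by chart, prove agreement on overlaps, glue using second countability, and read off uniqueness from \eqref{E:vol} --- is the natural and correct route. But the step you yourself flag as the crux is where the proposal actually goes wrong, and it is left unresolved rather than closed.

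The transition map $f_q\circ f_p^{-1}$ is a \emph{right} translation, not a left translation as you assert. Concretely: fix $r\in\dom(f_p)\cap\dom(f_q)$ and set $a=f_p(r)$, $b=f_q(r)$, so $r=a.p=b.q$. For $x$ near $r$ write $g=f_p(x)$, $h=f_q(x)$, so $x=g.p=h.q$. By Axiom 2, $p=a^{-1}.r$, hence $x=g.(a^{-1}.r)$, and after shrinking to a neighborhood where Axiom 3 applies, $x=(ga^{-1}).r$; likewise $x=(hb^{-1}).r$. Since $g\mapsto g.r$ is injective near $1_G$ by Axiom 4, $ga^{-1}=hb^{-1}$, i.e.
\[
f_q\bigl(f_p^{-1}(g)\bigr)=h=g\,a^{-1}b ,
\]
a right translation by the constant $f_p(r)^{-1}f_q(r)$. (Your proposed formula $h=\gamma g$ with $\gamma=f_q(p)f_p(p)^{-1}$ puts the constant on the wrong side; note also $f_p(p)=1_G$, and $f_q(p)$ need not even be defined.) This is exactly why the proposition fixes a \emph{right}-Haar measure for a \emph{left} partial action: right-invariance of $\Haar$ is precisely the invariance that makes $\Haar(f_p(K))=\Haar(f_q(K))$ for $K$ in a small overlap neighborhood, after which your countable-cover and $\sigma$-additivity argument, the gluing via second countability, and the uniqueness statement all go through as you describe. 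As written, though, the proposal asserts the wrong side and then says ``one must verify'' which side works, so the central computation is missing; supplying the displayed identity above is what completes the proof.
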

We write $\vol$ instead of $\vol_M$, when the choice of $M$ is clear.

\begin{lem}[Locally measure-preserving]\label{L:mp}
Let $(M,\a)$ be a local $G$-space and suppose $\{g\} \times  K \subset \dom(\a)$ for some measurable $K \subset M$ and $g \in G$. If $G$ is unimodular then 
$$\vol_M(g.K) = \vol_M(K).$$
\end{lem}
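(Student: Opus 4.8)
The strategy is to reduce the global statement to the local identity \eqref{E:vol} by a covering argument, using charts to transfer everything to the Haar measure on $G$, and then invoking unimodularity precisely once to compare left- and right-translates. First I would observe that both $\vol_M$ and the set function $E \mapsto \vol_M(g.E)$ (defined on Borel sets $E$ with $\{g\}\times E \subset \dom(\a)$) are Radon measures on the open set $\dom(\a)_g := \{p : (g,p)\in\dom(\a)\}$; this follows from Axiom 2, which guarantees $g.(\cdot)$ is a homeomorphism of $\dom(\a)_g$ onto $\dom(\a)_{g^{-1}}$, so $\vol_M(g.(\cdot))$ is a Borel pushforward of a Radon measure along a homeomorphism, hence Radon. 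Since Radon measures on an lcsc space are determined by their values on a basis of open sets, and since every point of $\dom(\a)_g$ has a neighborhood base of sets $K$ that are simultaneously contained in the domain of a chart $f_p$ centered at $p$ and small enough that $g.K$ lies in the domain of a chart $f_q$ centered at $q = g.p$, it suffices to prove $\vol_M(g.K) = \vol_M(K)$ for such small $K$.

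Next, for such a $K$, apply \eqref{E:vol} twice: $\vol_M(K) = \Haar(f_p(K))$ and $\vol_M(g.K) = \Haar(f_q(g.K))$, where $\Haar$ is the chosen right-Haar measure. The heart of the matter is then to identify the set $f_q(g.K) \subset G$ in terms of $f_p(K)$. For $a \in f_p(K)$ we have $a = f_p(a.p)$, i.e. the corresponding point is $a.p \in K$; its image under $g.(\cdot)$ is $g.(a.p)$, and by Axiom 3 (applied with $h = a$, noting $(a,p)$, $(g, a.p)$, and $(ga,p)$ all lie in $\dom(\a)$ once $K$ is chosen small enough) this equals $(ga).p$. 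Meanwhile $q = g.p$, and again by Axiom 3, $(ga).p = g.(a.p)$, so writing $b := f_q(g.(a.p))$ we get $b.q = g.(a.p) = (ga).p$, hence $b.(g.p) = (ga).p$. Shrinking $K$ further so that $f_q$ and $f_p$ are both "honest" charts, one reads off $b = (ga) f_p(p)^{-1}\cdot(\text{correction})$; more cleanly: the composition $f_q \circ (g.(\cdot)) \circ f_p^{-1}$, as a partially defined map $G \to G$, agrees on $f_p(K)$ with the map $a \mapsto b$ where $b.q = (ga).p$. Using that charts satisfy $g = f_p(g.p)$ literally, one verifies this composition is left-translation by a fixed element, i.e. $f_q(g.K) = c \cdot f_p(K)$ for some fixed $c \in G$ depending only on $g$, $p$, $q$ (not on $K$). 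This is the step I expect to be the main obstacle: pinning down that the chart-transferred map is exactly a left-translation requires carefully tracking which cocycle/correction terms appear in the partial-action composition (recall $g_1.g_2.g_3.p \neq g_1g_2g_3.p$ in general), and it is the place where the nonassociativity of the partial action could sneak in an extra factor; I would do this computation on a small enough $K$ where Axiom 3 applies without obstruction, and sanity-check it against the known associativity failures in \cite{MR4555895}.

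Finally, once $f_q(g.K) = c\cdot f_p(K)$, unimodularity of $G$ gives $\Haar(c \cdot f_p(K)) = \Haar(f_p(K))$, since a right-Haar measure on a unimodular group is also left-invariant. Combining, $\vol_M(g.K) = \Haar(f_q(g.K)) = \Haar(c\cdot f_p(K)) = \Haar(f_p(K)) = \vol_M(K)$ for all small $K$, and the covering/Radon-uniqueness argument of the first paragraph upgrades this to arbitrary measurable $K$ with $\{g\}\times K \subset \dom(\a)$. One small bookkeeping point to handle along the way: to cover $K$ by countably many chart-domain pieces on which the argument applies and to avoid double-counting, I would take a countable open cover of $\dom(\a)_g$ by such good sets, disjointify it into Borel pieces, apply the identity to each piece (the identity passes to Borel subsets of a good open set since both sides restrict to Radon measures there), and sum using countable additivity.
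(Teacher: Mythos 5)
First, a point of comparison: the paper does not prove Lemma \ref{L:mp} at all --- it is quoted from \cite{MR4555895} (``The next results are in \cite{MR4555895}''), so there is no in-paper argument to match yours against. Your overall strategy (localize to small Borel sets inside chart domains, transfer both $\vol_M(K)$ and $\vol_M(g.K)$ to the right-Haar measure via \eqref{E:vol}, identify the chart-transferred map as a translation-type map of $G$, invoke unimodularity once, then globalize by a countable disjointification using injectivity of $g.(\cdot)$ from Axiom 2) is sound and is surely the intended argument.

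However, the step you yourself flag as the main obstacle contains a genuine error: the composition $f_q \circ (g.(\cdot)) \circ f_p^{-1}$ is \emph{not} left-translation by a fixed element $c$; it is conjugation by $g$. Concretely, for $a \in f_p(K)$ small, set $b := gag^{-1}$. Then $(a,p)$, $(g,a.p)$, $(ga,p)$, $(g,p)$, $(b,g.p)$, $(bg,p)$ all lie in $\dom(\a)$ by openness and continuity, and two applications of Axiom 3 give $b.q = (bg).p = (ga).p = g.(a.p)$; since $b \in \rng(f_q)$ for $a$ small and the chart satisfies $f_q(b.q)=b$, you get $f_q(g.(a.p)) = gag^{-1}$, i.e.\ $f_q(g.K) = g\,f_p(K)\,g^{-1}$. (Organizing the computation this way --- exhibiting the candidate $b$ and verifying $b.q = g.(a.p)$ --- also avoids needing injectivity of $h \mapsto h.p$ near $g$ rather than near $1_G$, which your ``solve for $b$'' phrasing implicitly uses.) If the map really were $a \mapsto ca$ with $c$ fixed, the relation $(ca g).p = (ga).p$ would force $c = gag^{-1}a^{-1}$, which depends on $a$ --- this is exactly the nonassociativity-type correction you worried about. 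The error is non-fatal: for a right-Haar measure, $\Haar(gSg^{-1}) = \Haar(gS) = \Delta(g)^{\pm 1}\Haar(S)$, so conjugation preserves $\Haar$ precisely when $G$ is unimodular, and the rest of your argument (agreement on small Borel sets, disjointification, countable additivity) goes through unchanged. But the key computation should be corrected to the conjugation formula before the proof can be considered complete.
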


\begin{defn}\label{D:sofic group}
Let $M=(M,\a)$ be a local $G$-space and let $U \subset G$ be open and pre-compact and let $\eps>0$.  Let $M[U]=M[\a,U]$ be the set of all $p \in M$ such that if $g,h \in G$ are such that $g, h, gh \in U$ then $g.h.p=gh.p$ (in particular, both sides are well-defined). Moreover, we require that the map $g \mapsto \a(g,p)$ defines a homeomorphism from $U$ to an open neighborhood of $p$. We say $M$ is a {\bf $(U,\eps)$-sofic approximation to $G$} if $\vol_M(M)<\infty$ and
$$\vol_M(M[U]) \ge (1-\eps) \vol_M(M).$$
\end{defn}


\begin{defn} A \textbf{sofic approximation} to $G$ is a sequence $\Sigma = (M_i)_{i=1}^\infty$ where $M_i$ is a $(U_i,\epsilon_i)$-sofic approximation such that the $U_i$ are pre-compact open sets increasing to $G$ and the sequence $\epsilon_i$ decreases to $0$. We say $G$ is {\bf sofic} if it admits a sofic approximation. 
\end{defn}

By \cite{MR4555895}, all sofic groups are unimodular. It is unknown whether all unimodular lcsc groups are sofic.

\section{Preliminaries on uniform actions, pseudo-metrics, separated and spanning sets, and model spaces}\label{S:pttet}


\subsection{Uniform actions and conjugacies}
If $(Z,d_Z)$ is a metric space then an action $G \cc Z$ by homeomorphisms is {\bf uniform} if the map $G\times Z \to Z$ is uniformly continuous. This means: for every $\eps>0$ there exists $\d>0$ such that 
$$d_G(g_1,g_2) + d_Z(z_1,z_2) < \d \Rightarrow d_Z(g_1z_1,g_2z_2) < \eps$$
for all $g_1,g_2 \in G$ and $z_1,z_2\in Z$ where $d_G$ is a fixed left-invariant proper continuous metric on $G$. 

Suppose we are given two uniform actions $G \cc Z$ and $G \cc Y$. A map $\Phi:Y \to Z$ is a {\bf uniform conjugacy} if it is uniformly continuous, invertible, $\Phi^{-1}:Y \to Z$ is uniformly continuous and $\Phi(gz)=g\Phi(z)$ for all $g\in G$ and $z\in Z$. 


\subsection{Pseudo-metrics and separating/spanning/covering sets}
A {\bf pseudo-metric} $\rho:X\times X \to [0,\infty)$ on a set $X$ is a metric on $X$ except that we allow $\rho(x,y)=0$ even if $x \ne y$.  A pseudo-metric $\rho$ is 
\begin{itemize}
\item {\bf 1-bounded} if $\rho(x,y)\le 1$ for all $x,y \in X$;
\item {\bf generating} for an action $G \cc X$ if for every $x \ne y$ there exists $g \in G$ such that $\rho(gx,gy)>0$. 
\end{itemize}
If $(X,d_X)$ is a metric space then a pseudo-metric $\rho$ on $X$ is {\bf uniformly continuous} if for every $\eps>0$ there exists $\d>0$ such that $\rho(x,y)<\d$ implies $d_X(x,y)<\eps$.

If $(X,\rho)$ is a pseudo-metric space then a subset $Y \subset X$ is 
\begin{itemize}
\item {\bf $(\rho,\eps)$-separated} if $\rho(x,y)> \eps$ for all $x, y \in Y$ with $x \ne y$;
\item {\bf $(\rho,\eps)$-spanning} if for all $x\in X$ there exists $y \in Y$ with $\rho(x,y)< \eps$.
\end{itemize}
We say that a collection $\cC$ of subsets of $X$ is a {\bf $(\rho,\eps)$-cover} if the union of sets in $\cC$ is $X$ and each $C \in \cC$ has diameter $<\eps$.

Let 
\begin{itemize}
\item $\sep_\eps(X,\rho)$ be the maximum cardinality of a $(\rho,\eps)$-separated subset;
\item $\Span_\eps(X,\rho)$ be the minimum cardinality of a $(\rho,\eps)$-spanning subset;
\item $\cov_\eps(X,\rho)$ denote the minimum cardinality of a $(\rho,\eps)$-cover. 
\end{itemize}

It is well-known (and an easy exercise) that
\begin{eqnarray}\label{E:covsepspan}
\cov(\rho,2\eps) \le \Span(\rho,\eps) \le \sep(\rho,\eps)  \le \cov(\rho,\eps).
\end{eqnarray}

\subsection{Pseudo-metrics on model spaces}\label{S:rhoM}

\begin{notation}
Given a finite measure space $(M,\vol)$ we let $\bvol$ denote the probability measure $\bvol(E) = \frac{\vol(E)}{\vol(M)}$. 
\end{notation}

Suppose $(M,\ovol)$ is a finite measure space and $X$ is a Hausdorff space. Let $\Map(M,X)$ denote the space of all measurable maps from $M$ to $X$. If $\rho$ is a pseudo-metric on $X$ then  let $\rho^M$ be the pseudo-metric on $\Map(M,X)$ defined by
$$\rho^M(\phi,\psi) =  \ovol(M)^{-1} \int \rho(\phi(p),\psi(p))~\dee\vol(p) =  \int \rho(\phi(p),\psi(p))~\dee\bvol(p) .$$

More generally, suppose $\phi, \psi$ are measurable maps to $X$, but the domains of $\phi$ and $\psi$ are not necessarily all of $M$. Let $D \subset M$ be the intersection of the domains of $\phi$ and $\psi$.  Then define 
$$\rho^M(\phi,\psi) =  \bvol(M \setminus D)+\int_{p \in D} \rho(\phi(p),\psi(p))~\dee\bvol(p).$$
Then $\rho^M$ is symmetric and satisfies the triangle inequality but $\rho^M(\phi,\phi)$ can be positive and $\rho^M(\phi,\psi)$ can be zero even if $\phi \ne \psi$. So $\rho^M$ is a quasi-metric.







\section{Topological sofic entropy}\label{S:tse}

Let $(X,d_X)$ be a Polish metric space and $G\cc X$ a uniform action. Let $\rho:X\times X \to \R$ be a continuous pseudo-metric. In this section, we define the topological $\Si$-entropy of the action and prove it is invariant under uniform conjugacy.


\begin{defn}
Let $M$ be a finite volume local $G$-space and $\phi:M \to X$. For $g \in G$ define $\phi \circ g$ by $(\phi \circ g)(p) = \phi(g.p)$ for every $p\in M$  such that $g.p$ is well-defined. 
 We also define $g\circ \phi:M \to X$ in the associative way $(g\circ \phi)(p) = g(\phi(p))$.
\end{defn}

\begin{defn}
Let $U\subset G$ be a pre-compact neighborhood of identity, $\eps >0$. Suppose that $M$ is a finite volume local $G$-space. A map $\phi:M \to X$ is {\bf $(U,\d,\rho^M)$-equivariant} if
 $$\rho^M\left(\phi \circ g, g\circ \phi\right)  < \delta$$
 for all $g\in U$. Denote by $\Map\left(M,X, \rho: U,\delta \right) \subset \Map(M,X)$ the set of all $(U,\d,\rho^M)$-equivariant maps.  
 
 
  
 \end{defn}
 
 \begin{defn}
To simplify notation, we write $\sep_\epsilon\left(\Map\left(M,X, \rho:U,\delta \right)\right)$ for the maximum cardinality of a $(\rho^{M},\eps)$-separated subset of $\Map\left(M,X, \rho:U,\delta \right)$. We define $\Span_\eps(\cdot)$ and $\cov_\eps(\cdot)$ similarly. 
 \end{defn}


\begin{defn}\label{MapEntropy1}
Let $G \cc X$ and $\rho$ be as above. Also let $\Sigma=\{M_i\}_{i=1}^\infty$ be a sofic approximation to $G$.  Define 
\begin{eqnarray*}
h^\eps_\Si\left(G, X, \rho\right)&=& \inf_{U\subset G}\inf_{\d>0}\limsup_{i\to \infty}\frac{1}{\ovol\left(M_i\right)}\log \sep_\epsilon\left(\Map\left(M_i,X, \rho:U,\delta \right)\right) \\
h_\Si\left(G, X, \rho\right)&=& \lim_{\eps \searrow 0} h^\eps_\Si\left(G, X, \rho\right)
\end{eqnarray*}
where the first infimum is over all pre-compact neighborhoods $U$ of the identity in $G$. By (\ref{E:covsepspan}), we can replace $\sep$ with $\Span$ or $\cov$ without changing the values of $h^\eps_\Si$ or $h_\Si$. 
\end{defn}

\begin{remark}
The quantity $\sep_\epsilon\left(\Map\left(M_i,X, \rho:U,\delta \right)\right)$ decreases monotonically as $\d$ decreases or $U$ increases. It increases as $\eps$ decreases. So an equivalent definition of entropy is:
$$h_\Si\left(G, X, \rho\right) = \lim_{\eps \searrow 0}\lim_{U\nearrow G}\lim_{\d\searrow 0}\limsup_{i\to \infty}\frac{1}{\ovol\left(M_i\right)}\log \sep_\epsilon\left(\Map\left(M_i,X, \rho:U,\delta \right)\right).$$

\end{remark}

Our main theorem is that topological sofic entropy is invariant under uniform conjugacy.

\begin{thm}\label{thm:top}
Let $G \cc X$, $G \cc Y$ be uniform actions on Polish metric spaces $(X,d_X), (Y,d_Y)$. Also let $\rho_X, \rho_Y$ be uniformly continuous, generating, 1-bounded pseudo-metrics on $X$ and $Y$ respectively. Suppose the actions are uniformly conjugate. Then 
$$h_\Si\left(G, X, \rho_X\right) = h_\Si\left(G, Y, \rho_Y\right)$$
for any sofic approximation $\Si$. 
\end{thm}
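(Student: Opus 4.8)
The plan is to show the two entropies agree by proving each is bounded above by the other; by symmetry of the hypotheses it suffices to establish one inequality, say $h_\Si(G,X,\rho_X) \le h_\Si(G,Y,\rho_Y)$, and then swap the roles of $X$ and $Y$ using $\Phi^{-1}$. So let $\Phi:Y \to X$ be the uniform conjugacy (uniformly continuous, with uniformly continuous inverse, $G$-equivariant). First I would record the key ``transport'' observation: a measurable map $\psi:M_i \to Y$ can be pushed to $\Phi \circ \psi: M_i \to X$, and conversely $\phi:M_i\to X$ pulls back to $\Phi^{-1}\circ \phi$. Because $\Phi$ is uniformly continuous and $\rho_X,\rho_Y$ are uniformly continuous pseudo-metrics on Polish metric spaces, a standard three-$\eps$ argument shows: for every $\eps>0$ there is $\eps'>0$ so that $\rho_Y(y,y') < \eps'$ implies $\rho_X(\Phi y, \Phi y') < \eps$, and hence $\rho_Y^{M_i}(\psi,\psi') < \eps'$ implies $\rho_X^{M_i}(\Phi\circ\psi,\Phi\circ\psi') < \eps$; the reverse implication holds with $\Phi^{-1}$. (Here I am using the generating hypothesis only to know that $\rho_X,\rho_Y$ separate points up to the group action, which is not actually needed for this direction — it becomes relevant only if one wants the entropy to be a genuine conjugacy invariant rather than depending on $\rho$; in fact with generating pseudo-metrics $\rho^{M_i}$-separation and genuine inequality of maps are comparable on the relevant sets, but I will not need that subtlety here.)

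The second step is to relate $(U,\d,\rho_X^{M_i})$-equivariance of $\phi$ to $(U',\d',\rho_Y^{M_i})$-equivariance of $\psi := \Phi^{-1}\circ\phi$. Since $\Phi$ intertwines the $G$-actions, $g\circ(\Phi^{-1}\circ\phi) = \Phi^{-1}\circ(g\circ\phi)$, while $(\Phi^{-1}\circ\phi)\circ g = \Phi^{-1}\circ(\phi\circ g)$ on the common domain. Therefore $\rho_Y^{M_i}(\psi\circ g, g\circ\psi) = \rho_Y^{M_i}\big(\Phi^{-1}\circ(\phi\circ g),\, \Phi^{-1}\circ(g\circ\phi)\big)$, and by the uniform continuity of $\Phi^{-1}$ this is controlled by $\rho_X^{M_i}(\phi\circ g, g\circ\phi)$ — more precisely, given $\d_Y>0$ choose $\d_X>0$ from uniform continuity of $\Phi^{-1}$ so that $\rho_X^{M_i}$-distance $<\d_X$ forces $\rho_Y^{M_i}$-distance $<\d_Y$ (again a three-$\eps$/dominated-convergence estimate, uniform in $i$ and in $g$). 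Hence $\phi \in \Map(M_i,X,\rho_X:U,\d_X)$ implies $\Phi^{-1}\circ\phi \in \Map(M_i,Y,\rho_Y:U,\d_Y)$, using the \emph{same} pre-compact neighborhood $U$. Combining with Step 1: a $(\rho_X^{M_i},\eps)$-separated subset of $\Map(M_i,X,\rho_X:U,\d_X)$ maps, via $\phi\mapsto \Phi^{-1}\circ\phi$, to a $(\rho_Y^{M_i},\eps')$-separated subset of $\Map(M_i,Y,\rho_Y:U,\d_Y)$ of the same cardinality (injectivity of $\phi\mapsto\Phi^{-1}\circ\phi$ is clear since $\Phi$ is a bijection). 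Therefore
$$\sep_\eps\big(\Map(M_i,X,\rho_X:U,\d_X)\big) \le \sep_{\eps'}\big(\Map(M_i,Y,\rho_Y:U,\d_Y)\big).$$

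The third step is bookkeeping with the limits. Fix $\eps>0$ and take the $\eps'$ produced above. For any pre-compact neighborhood $U$ of $1_G$ and any $\d_Y>0$, choose $\d_X$ as above; dividing the displayed inequality by $\ovol(M_i)$, taking $\limsup_{i\to\infty}$, then $\inf$ over $\d_Y>0$ (which forces $\d_X\searrow 0$ as well, since $\d_X$ can be taken monotone in $\d_Y$), then $\inf$ over $U$, yields $h^{\eps'}_\Si(G,X,\rho_X) \le h^{\eps'}_\Si(G,Y,\rho_Y)$; actually more carefully the left side after the infima is $\ge h^\eps_\Si$ only if $\eps' \le$ something — let me instead just note $h^\eps_\Si(G,X,\rho_X) \le h^{\eps'}_\Si(G,Y,\rho_Y)$ directly from the inequality of separated numbers (larger separation parameter on the right gives smaller sep-number, so I want the \emph{same} parameter; so instead run Step 1 the other way: $\rho_X^{M_i}$-distance $>\eps$ between $\phi,\phi'$ should force $\rho_Y^{M_i}$-distance $>\eps''$ between $\Phi^{-1}\circ\phi,\Phi^{-1}\circ\phi'$ — this is where uniform continuity of $\Phi$ itself, not $\Phi^{-1}$, is used: $\rho_Y^{M_i}(\psi,\psi')\le\eps''$ would give $\rho_X^{M_i}(\phi,\phi')\le\eps$, contradiction). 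With that corrected direction we get $\sep_\eps(\Map_X:U,\d_X) \le \sep_{\eps''}(\Map_Y:U,\d_Y)$ with $\eps''>0$ depending only on $\eps$. Now take $\limsup_i$, $\inf_{\d_Y}$, $\inf_U$ to obtain $h^\eps_\Si(G,X,\rho_X) \le h^{\eps''}_\Si(G,Y,\rho_Y) \le h_\Si(G,Y,\rho_Y)$, and finally let $\eps\searrow 0$: $h_\Si(G,X,\rho_X) \le h_\Si(G,Y,\rho_Y)$. Applying the same argument to $\Phi^{-1}$ gives the reverse inequality, completing the proof.

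\textbf{Main obstacle.} The one genuinely delicate point is the uniformity of the modulus-of-continuity estimates \emph{across the sofic approximation} and \emph{across $g\in U$}: I need that passing from $\rho_X^{M_i}$ to $\rho_Y^{M_i}$ (an integral over $M_i$ against $\bvol$) respects the pointwise modulus of continuity of $\Phi$ uniformly in $i$. This is true because $\bvol$ is a \emph{probability} measure on each $M_i$ and $\rho_X,\rho_Y$ are $1$-bounded, so one splits $M_i$ into the region where $\rho$ is small (controlled by the modulus) and its complement (small measure, bounded integrand) — the bound produced is independent of $M_i$. The equivariance transport across $g\in U$ is uniform for the same reason together with pre-compactness of $U$ (so $U$ is covered by finitely many balls, though actually the estimate doesn't even need that — it is pointwise in $g$ with a $g$-independent modulus). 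I would isolate this as a short lemma (``uniformly continuous maps induce uniformly continuous maps on model spaces, with modulus independent of the model'') and then the rest is the routine limit-shuffling above.
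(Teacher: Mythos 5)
There is a genuine gap, and it sits exactly where you wave it away. Your ``key transport observation'' --- that for every $\eps>0$ there is $\eps'>0$ with $\rho_Y(y,y')<\eps'$ implying $\rho_X(\Phi y,\Phi y')<\eps$ --- is false for general generating pseudo-metrics, and no three-$\eps$ argument can produce it, because a pseudo-metric does not determine the uniform structure of the space. Concretely: let $X=Y=\{0,1\}^\Z$ with the shift action, $\rho_X(x,x')=|x_0-x'_0|$, $\rho_Y(y,y')=|y_0-y'_0|$ (both $1$-bounded, uniformly continuous, and generating), and let $\Phi$ be the shift itself, a uniform conjugacy. Then $\rho_X(\Phi y,\Phi y')=|y_1-y'_1|$, which can equal $1$ while $\rho_Y(y,y')=0$. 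So the pointwise claim fails, hence so does the integrated claim on $\Map(M_i,\cdot)$, and with it your comparison of separated sets. This is precisely why the generating hypothesis is \emph{not} dispensable ``for this direction'': the paper first proves (Lemma \ref{lem:metric}) that one may replace $\rho$ by the averaged metric $\rho_f(x,y)=\int\rho(gx,gy)f(g)\,\dee\Haar(g)$ without changing $h_\Si$, and it is only for such dynamically-averaged metrics (which see a whole neighborhood of the orbit, not just the time-zero observable) that the conjugacy transports moduli of continuity. That reduction is the bulk of the work --- it requires the equivariance-on-average device of Proposition \ref{P:weakened} and the covering Lemma \ref{L:group} --- and your proposal omits it entirely while explicitly asserting it is unnecessary.

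Once the reduction to metrics is granted, the remainder of your argument (push microstates forward by $\Phi$, check that equivariance and separation are preserved with uniform moduli independent of $M_i$ because $\bvol$ is a probability measure and the pseudo-metrics are bounded, then shuffle the limits) coincides with the paper's short concluding argument for Theorem \ref{thm:top}, including your mid-proof correction of which direction of continuity controls separation. So the proposal is not a different route that happens to work; it is the paper's final step with its essential first step missing.
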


\begin{defn}
We will often write $h_\Si(G,X)$ for $h_\Si\left(G, X, \rho_X\right)$ where $\rho_X$ is implicit. This invariant is called the {\bf $\Si$-entropy} or {\bf topological sofic entropy} of the action $G\cc X$. It may depend on the choice of sofic approximation $\Si$, however this choice is often left implicit in informal discussion.
\end{defn}

\begin{remark}
If $G$ is a countable group then the definition of $h_\Si(G,X)$ reduces in a straightforward way to the standard definition of topological sofic entropy, as presented in \cite{MR3616077} for example. 
\end{remark}

\begin{notation}
To avoid being overly verbose, we use the following conventions. The letter $U, \tU$ and variants always denotes a pre-compact open neighborhood of the identity while $\d, \tdelta, \eps, \eps'$ etc. are always positive.
\end{notation}




\subsection{Equivariance on average}

To prove Theorem \ref{thm:top}, we first weaken the notion of a good map from ``for all $g \in U$'' to an average. For this section, we write $\rho=\rho_X$. We say a map $\phi:M \to X$ is {\bf $(U,\d,\rho^M)$-equivariant-on-average} if 
$$\int_U \rho^M(g \circ \phi, \phi \circ g) ~\dee\Haar(g) < \d \Haar(U).$$
Let $\Map_{\avg}\left(M,X, \rho:U,\delta \right)$ be the space of all such maps. Also, let
$$h_{\Si,\avg}\left(G, X, \rho\right) = \lim_{\eps \searrow 0} \inf_{U\subset G}\inf_{\d>0}\limsup_{i\to \infty}\frac{1}{\ovol\left(M_i\right)}\log \sep_\epsilon\left(\Map_{\avg}\left(M_i,X, \rho:U,\delta \right)\right).$$

\begin{prop}\label{P:weakened}
Let $\rho$ be a uniformly continuous, 1-bounded pseudo-metric on $X$. Then 
$$h_\Si\left(G, X, \rho\right) = h_{\Si,\avg}\left(G, X, \rho\right).$$
\end{prop}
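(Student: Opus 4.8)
The inequality $h_{\Si,\avg}\left(G,X,\rho\right) \ge h_\Si\left(G,X,\rho\right)$ is immediate, since every $(U,\d,\rho^M)$-equivariant map is $(U,\d,\rho^M)$-equivariant-on-average, so $\Map\left(M,X,\rho:U,\d\right) \subset \Map_{\avg}\left(M,X,\rho:U,\d\right)$ and separated numbers are monotone. The substance is the reverse inequality: given a map $\phi$ that is only equivariant on average over a large set $U$, we must perturb it to a genuinely equivariant map over a slightly smaller set $\tU$, without moving it too far in $\rho^M$ and without collapsing too many maps together (so that separated sets stay large, up to a sub-exponential factor). The plan is to fix $\eps>0$, a pre-compact neighborhood $\tU$ of the identity and $\tdelta>0$, and produce (larger) $U$ and (smaller) $\d$ together with a map $\phi \mapsto \phi'$ from $\Map_{\avg}\left(M_i,X,\rho:U,\d\right)$ into $\Map\left(M_i,X,\rho:\tU,\tdelta\right)$ such that $\rho^{M_i}(\phi,\phi')$ is small; then pushing a $(\rho^{M_i},\eps)$-separated set forward loses at most a controlled amount.

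\textbf{Key steps.} First I would use a Markov/Fubini argument on the average-equivariance hypothesis: if $\int_U \rho^{M_i}(g\circ\phi,\phi\circ g)\,\dee\Haar(g) < \d\,\Haar(U)$, then for all but a set of $g \in U$ of Haar measure at most $\sqrt{\d}\,\Haar(U)$, we have $\rho^{M_i}(g\circ\phi,\phi\circ g) < \sqrt{\d}$. Combining this for all $g$ in a finite set chosen to generate (approximate) equivariance over $\tU$ — more precisely, covering $\tU$ by finitely many translates of a small neighborhood $V$ of $1_G$ on which the actions on $X$, on $G$, and (via Lemma~\ref{L:mp} and the chart structure on $M_i[U]$) on $M_i$ are all uniformly close to the identity — one shows that $\phi$ itself is already $(\tU,\tdelta/2,\rho^{M_i})$-equivariant, \emph{provided} $\tU$ and $V$ lie well inside $U$ so that the soficity estimate $\vol(M_i[U]) \ge (1-\eps_i)\vol(M_i)$ controls the region where $g.h.p = gh.p$ fails. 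In other words, the perturbation $\phi' = \phi$ may suffice: average equivariance over a large $U$ already forces pointwise equivariance over $\tU$ away from a small-measure set, and the $\rho^{M_i}$ metric only sees the average, so the small-measure set contributes at most its measure (times the $1$-bound) to $\rho^{M_i}(\phi\circ g, g\circ\phi)$. Thus no genuine perturbation is needed; one just checks $\Map_{\avg}\left(M_i,X,\rho:U,\d\right) \subset \Map\left(M_i,X,\rho:\tU,\tdelta\right)$ for suitable parameters, which reverses the entropy inequality directly without any loss, and in fact shows $h_{\Si,\avg} \le h_\Si$.

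\textbf{Main obstacle.} The delicate point is the interchange between the three uniformity scales — the metric $d_G$ on $G$, the action's uniform continuity on $X$, and the local chart structure of the sofic approximation $M_i$ (where $g.h.p = gh.p$ only on $M_i[U]$ and the canonical measure is only \emph{locally} invariant, Lemma~\ref{L:mp}, assuming $G$ unimodular, which holds for sofic $G$). Concretely: to deduce $\rho^{M_i}(\phi\circ h, h\circ\phi)$ small for a \emph{given} $h\in\tU$ from the same statement for a nearby $g$ in a fixed finite net, I need $\rho(\phi(h.p),\phi(g.p))$ and $\rho(h\phi(p),g\phi(p))$ both small for most $p$; the second is uniform continuity of $G\cc X$, but the first requires that $h.p$ and $g.p$ are $d_X$-close, i.e.\ that $g^{-1}h.(g.p)$ (defined since $g,h,g^{-1}h$ can be arranged inside $U$ for $p \in M_i[U]$) is close to $g.p$, which is Axiom~4 plus uniform continuity, uniformly in $p$ — this uniformity over $p\in M_i$ is where one must be careful, and it is exactly why $\rho$ is assumed uniformly continuous and $1$-bounded. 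I expect the bookkeeping of which neighborhoods must be nested inside which (choosing $V$, then $\tU$ with $\tU V \subset$ something inside $U$, then $\d$ small relative to $\vol$-fractions and the modulus of uniform continuity) to be the only real work; everything else is Markov's inequality and the triangle inequality for $\rho^{M_i}$.
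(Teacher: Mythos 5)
Your overall architecture is right: the easy inclusion gives $h_\Si \le h_{\Si,\avg}$, and the reverse inequality does indeed follow from a bare inclusion of microstate spaces, $\Map_{\avg}(M_i,X,\rho:\tU,\tdelta) \subset \Map(M_i,X,\rho:U,\d)$ for suitably larger $\tU$ and smaller $\tdelta$, with no perturbation of $\phi$ needed. The paper's proof has exactly this shape. But the mechanism you propose for establishing the inclusion has a genuine gap. You want to transfer approximate equivariance from a ``good'' group element $g$ (supplied in abundance by Markov's inequality) to a nearby element $h$ in a finite net covering $U$, and you correctly isolate the problematic term: you need $\rho(\phi(h.p),\phi(g.p))$ small for most $p$. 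You propose to get this from the fact that $h.p$ and $g.p$ are close in $M_i$ (Axiom 4 plus uniform continuity of the partial action). That step fails: $\phi$ is only a \emph{measurable} map from $M_i$ to $X$, with no continuity hypothesis whatsoever, so closeness of $h.p$ and $g.p$ in $M_i$ gives no control on $\rho(\phi(h.p),\phi(g.p))$. There is no way to repair this for general measurable microstates, and restricting to continuous $\phi$ would change the definition of the entropy.

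The paper sidesteps this with a different idea. Instead of approximating $g\in U$ by a nearby good element, it writes $g$ \emph{exactly} as a product $g=hk$ of two good elements, $h\in W\cap B(r_0)$ and $k\in W$, where $W$ is the large-measure set of good elements in a bigger ball $\tU=B(r_2)$. The existence of such a factorization is Lemma \ref{L:group}: if $W_0\subset B(r_0)$ and $W_2\subset B(r_2)$ each occupy all but a $\d$-fraction of their balls and $r_0+r_1<r_2$ with $\d\Haar(B(r_2))<(1-\d)\Haar(B(r_0))$, then $W_0W_2\supset B(r_1)\supset U$ (this uses unimodularity, which holds since $G$ is sofic). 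The triangle inequality then splits $\rho^{M_i}(g\circ\phi,\phi\circ g)$ into three terms: two are controlled by the goodness of $k$ and of $h$ respectively (together with uniform continuity of the action applied to elements of $B(r_0)$, and local measure preservation, Lemma \ref{L:mp}), and the third compares $\phi(h.k.p)$ with $\phi(hk.p)$, which are \emph{equal} for $p\in M_i[\tU^2]$ — an exact identity on a set of nearly full measure, so no continuity of $\phi$ is ever invoked. I'd suggest reworking your argument around such a product decomposition; the net-and-continuity approach cannot be made to work.
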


To prove this we will need the following lemma. Let $B(r) \subset G$ denote the open ball of radius $r$ centered at the identity in $G$. 
\begin{lem}\label{L:group}
Let $0<r_0<r_1<r_2$ and $\d>0$. Suppose $r_0+r_1<r_2$ and
$$\d\Haar(B(r_2)) < (1-\d)\Haar(B(r_0)).$$
If $W_i \subset B(r_i)$ are any measurable subsets with 
$$\Haar(W_i) > (1-\d)\Haar(B(r_i))$$
for $i=0,2$ then $W_0W_2 \supset B(r_1)$. 
 \end{lem}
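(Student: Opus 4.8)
The claim is a "covering by products of large subsets" statement: if $W_0 \subseteq B(r_0)$ and $W_2 \subseteq B(r_2)$ each occupy more than a $(1-\d)$-fraction of their ambient balls, then every element of $B(r_1)$ can be written as a product $w_0 w_2$ with $w_i \in W_i$. The plan is the standard pigeonhole argument: fix $x \in B(r_1)$ and show that the "left-translated complement-avoidance" forces $x \in W_0 W_2$. Concretely, I would set $V_0 = W_0$ and consider $x W_2^{-1} = \{x w^{-1} : w \in W_2\}$; then $x \in W_0 W_2$ if and only if $W_0 \cap x W_2^{-1} \neq \emptyset$. So it suffices to show these two sets cannot be disjoint.

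First I would locate both sets inside a common reference set of controlled measure, namely $B(r_2)$ (or a slight enlargement). Since $W_0 \subseteq B(r_0) \subseteq B(r_2)$, the first set is already there. For the second: if $w \in W_2 \subseteq B(r_2)$ then $x w^{-1}$ has $d_G$-distance from the identity at most $d_G(x, 1_G) + d_G(w, 1_G) < r_1 + r_2$ — but that is too big. The point of the hypothesis $r_0 + r_1 < r_2$ is to run the argument the other way: I should instead write $x \in W_0 W_2$ as $W_0^{-1} x \cap W_2 \neq \emptyset$, i.e. ask whether $W_2$ meets $W_0^{-1} x$. For $w_0 \in W_0 \subseteq B(r_0)$ we have $d_G(w_0^{-1} x, 1_G) \le d_G(w_0^{-1}, 1_G) + d_G(x,1_G) < r_0 + r_1 < r_2$ using left-invariance of $d_G$ (so $d_G(w_0^{-1},1_G) = d_G(1_G, w_0) < r_0$). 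Hence $W_0^{-1} x \subseteq B(r_2)$, and also $W_2 \subseteq B(r_2)$. Now $\Haar$ is a right-Haar measure (as fixed in Proposition \ref{P:canonicalmeasure}) and $G$ is unimodular since it is sofic, so $\Haar$ is also left-invariant; thus $\Haar(W_0^{-1} x) = \Haar(W_0^{-1}) = \Haar(W_0) > (1-\d)\Haar(B(r_0))$, where the middle equality uses unimodularity (inversion preserves Haar on a unimodular group) and the last inequality is the hypothesis. Meanwhile $\Haar(W_2) > (1-\d)\Haar(B(r_2))$.

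If $W_0^{-1} x$ and $W_2$ were disjoint subsets of $B(r_2)$, then
$$\Haar(B(r_2)) \ge \Haar(W_0^{-1}x) + \Haar(W_2) > (1-\d)\Haar(B(r_0)) + (1-\d)\Haar(B(r_2)),$$
which rearranges to $\d \Haar(B(r_2)) > (1-\d)\Haar(B(r_0))$, contradicting the standing hypothesis $\d\Haar(B(r_2)) < (1-\d)\Haar(B(r_0))$. Therefore $W_0^{-1}x \cap W_2 \neq \emptyset$, i.e. there exist $w_0 \in W_0$, $w_2 \in W_2$ with $w_0^{-1} x = w_2$, so $x = w_0 w_2 \in W_0 W_2$. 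Since $x \in B(r_1)$ was arbitrary, $B(r_1) \subseteq W_0 W_2$.

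The only genuinely delicate point — the "main obstacle" — is bookkeeping about which Haar measure and which invariance is being used: the argument needs $\Haar$ to be simultaneously left- and right-invariant and inversion-invariant, which is exactly unimodularity, available here because sofic lcsc groups are unimodular (cited after Definition \ref{D:sofic group}). Everything else is the elementary inclusion-exclusion pigeonhole, and the hypotheses have been arranged precisely so the measure inequality closes. I would make sure to state explicitly at the start that $G$ is unimodular so $\Haar$ is bi-invariant and $\Haar(A^{-1}) = \Haar(A)$.
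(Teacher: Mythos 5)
Your argument is correct and is essentially the paper's own proof: the paper likewise reduces the claim (via unimodularity, so that $W_0$ may be replaced by $W_0^{-1}$) to showing that a right-translate of $W_0$ meets $W_2$ inside $B(r_2)$, and closes with the same inclusion--exclusion pigeonhole using $r_0+r_1<r_2$ and the hypothesis $\d\Haar(B(r_2))<(1-\d)\Haar(B(r_0))$. Your explicit bookkeeping of left-invariance of $d_G$ and bi-invariance of $\Haar$ is exactly the point the paper leaves implicit.
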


\begin{proof}
Because $G$ is sofic, it is unimodular. So it suffices to show $W_0^{-1}W_2 \supset B(r_1)$. Equivalently, we must show that for every $g \in B(r_1)$, $W_0g \cap W_2 \ne \emptyset$. Thus it suffices to show
$$\Haar(W_0g \cup W_2) < \Haar(W_0) +  \Haar(W_2).$$
This follows from:
$$\Haar(W_0g \cup W_2) \le \Haar(B(r_2)) \le (1-\d)[\Haar(B(r_0)) + \Haar(B(r_2))] <  \Haar(W_0) +  \Haar(W_2).$$
The first equality holds because $B(r_2) \supset W_0g \cap W_2$, while the other inequalities hold by hypothesis.
\end{proof}

\begin{proof}[Proof of Proposition \ref{P:weakened}]
Because $\Map_{\avg}\left(M_i,X, \rho:U,\delta \right) \supset \Map\left(M_i,X, \rho:U,\delta \right)$, it is immediate that
$$h_\Si\left(G, X, \rho\right) \le h_{\Si,\avg}\left(G, X, \rho\right).$$
To obtain the opposite inequality, it suffices to show that for every $U,\d$ there exists $\tU$, $\tdelta$, $I$ such that if $i>I$ then $\Map_{\avg}\left(M_i,X, \rho:\tU,\tdelta \right) \subset \Map\left(M_i,X, \rho:U,\delta \right)$. 

Let $U,\d$ be given. Without loss of generality, $\d<1$. We choose parameters $\k,r_0,r_1,r_2, \tdelta, \tU, i$ as follows. Because $\rho$ and the action are uniformly continuous there exist $\k>0$ and $r_0>0$ such that if $x,y \in X$ satisfy $\rho(x,y)<\k$ and $g \in B(r_0)$ then $\rho(gx,gy)<\d/10$. 

Let $r_1>0$ be such that $U \subset B(r_1)$. Choose  $r_2>r_0+r_1$. Let $\tdelta>0$ be such that 
\begin{eqnarray}\label{E:tdelta1}
\tdelta\Haar(B(r_2)) < (1-\tdelta)\Haar(B(r_0))
\end{eqnarray}
and $\tdelta < \min(\k, \delta/10)$. Let $\tU = B(r_2)$. 

Suppose $i$ is large enough so that $\vol(M_i[\tU^2]) > (1-\tdelta^{10})\vol(M_i)$. Let $\phi \in \Map_{\avg}\left(M_i,X, \rho:\tU,\tdelta^{10} \right)$. It suffices to show $\rho^{M_i}(g \circ \phi, \phi \circ g) < \d$ for all $g \in U$. 

For $g \in \tU$, let $M_i[g]$ be the set of points $p\in M_i$ such that $g.p$ is well-defined. Also define
\begin{eqnarray*}
\tM_i(g)&=&\left\{ p \in M_i[g]:~ \rho(g \phi(p), \phi(g.p))<\tdelta^4\right\}\\
W &=& \left\{g \in \tU:~ \bvol(\tM_i(g)) > 1-\tdelta^4\right\}.
\end{eqnarray*}
Because $\phi$ is $(\tU,\tdelta^{10},\rho^{M_i})$-equivariant-on-average,
\begin{eqnarray*}
 \tdelta^{10} \Haar(\tU)  &>& \int_{\tU} \int_{M_i[g]} \rho(g \phi(p), \phi(g.p))~\dee\bvol(p)~\dee\Haar(g) \\
 &\ge& \int_{\tU \setminus W} \int_{M_i[g] \setminus \tM_i(g)} \rho(g \phi(p), \phi(g.p))~\dee\bvol(p)~\dee\Haar(g) \\
 &\ge& \tdelta^4 \int_{\tU \setminus W} \bvol(p)(M_i[g] \setminus \tM_i(g)) ~\dee\Haar(g) \\
  &\ge& \tdelta^4 \int_{\tU \setminus W} \tdelta^4 - \bvol(p)(M_i \setminus M_i[g]) ~\dee\Haar(g) \\
  &\ge& \Haar(\tU \setminus W)  \tdelta^4(\tdelta^4-\tdelta^{10}) >  \Haar(\tU \setminus W)  \tdelta^9
 \end{eqnarray*}
 
Therefore,
$$ \tdelta \Haar(\tU) > (1-\tdelta)\Haar(\tU \setminus W).$$
We use $\tU \supset B(r_0)$ and (\ref{E:tdelta1}) to obtain
\begin{eqnarray*}
\Haar(W \cap B(r_0)) &\ge&  \Haar(B(r_0)) - \Haar(\tU \setminus W)\\
&>&  \Haar(B(r_0)) - \tdelta\Haar(\tU) \ge (1-\tdelta)\Haar(B(r_0)).
 \end{eqnarray*}

Now suppose $g \in U$. By Lemma \ref{L:group}, there exist $h \in W \cap B(r_0)$ and $k \in W$ such that $g=hk$.  
By the triangle inequality,
\begin{eqnarray*}
&&\rho^{M_i}(g \circ \phi, \phi \circ g) = \rho^{M_i}(hk \circ \phi, \phi \circ hk) \\
&\le& \rho^{M_i}(hk \circ \phi, h\circ (\phi \circ k))  + \rho^{M_i}( h\circ (\phi \circ k), (\phi \circ h)\circ k )  + \rho^{M_i}((\phi \circ h)\circ k, \phi \circ hk).
\end{eqnarray*}

To estimate the first summand, note that, because $k \in W$, $\vol(\tM_i(k))>(1-\tdelta^4)\vol(M_i)$. Moreover, if $p \in \tM_i(k)$ then $\rho(k\phi(p), \phi(k.p)) < \tdelta^4 < \k$ which implies (by choice of $r_0$) that $\rho(hk\phi(p), h\phi(k.p))< \d/10$. Thus
$$  \rho^{M_i}(hk \circ \phi, h\circ (\phi \circ k))  \le \d/10 + \bvol(M_i \setminus \tM_i(k)) \le \d/10+ \tdelta^4 < 2\d/10.$$

To estimate the second summand, suppose $k.p \in \tM_i(h)$ (i.e. $p \in k^{-1}.\tM_i(h)$). Then $\rho(h\phi(k.p), \phi(h.k.p)) \le \tdelta^4$. Recall that volume is locally preserved by Lemma \ref{L:mp}. Since $k \in W$, it follows that
$$\vol(k^{-1}.\tM_i(h)) \ge \vol(\tM_i(h) \cap M[\tU]) \ge (1-\tdelta^4-\d/10)\vol(M_i).$$
Therefore,
$$\rho^{M_i}( h\circ (\phi \circ k), (\phi \circ h)\circ k ) \le 2\tdelta^4 +\d/10 \le 3\d/10.$$
For the last summand, suppose $p \in M_i[\tU^2]$. Then $h.k.p = hk.p$. Therefore,
$$\rho^{M_i}(  (\phi \circ h)\circ k , \phi \circ hk ) \le \bvol(M_i \setminus M_i[\tU^2]) \le \d/10.$$
Combining these estimates yields
$$\rho^{M_i}(g \circ \phi, \phi \circ g) \le 6\d/10<\d.$$
Since $g \in U$ is arbitrary, this proves $\phi \in \Map\left(M_i,X, \rho:U,\delta \right)$. 
\end{proof}

\subsection{Replacing pseudo-metrics with metrics}

Next we prove that pseudo-metrics can be replaced with metrics.
\begin{lem}\label{lem:metric}
Let $\rho$ be a uniformly continuous, generating, 1-bounded pseudo-metric on $X$, and $f:G \to (0,\infty)$ be a continuous function with $1 = \int f~\dee\Haar <\infty$.  Define $\rho_f$ by
$$\rho_f(x,y) : = \int \rho(gx,gy)f(g)~\dee\Haar(g).$$
Then $h_\Si\left(G, X, \rho\right) = h_\Si\left(G, X, \rho_f\right)$. Moreover, $\rho_f$ is a uniformly continuous $1$-bounded metric on $X$. 
\end{lem}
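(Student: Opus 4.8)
The plan is to prove the statement in two halves: first that $\rho_f$ has the required regularity properties (uniformly continuous, $1$-bounded metric), and second that passing from $\rho$ to $\rho_f$ does not change the entropy. For the first half: $1$-boundedness of $\rho_f$ is immediate from $1$-boundedness of $\rho$ and $\int f\,\dee\Haar=1$. Uniform continuity of $\rho_f$ follows because $\rho$ is uniformly continuous and the action is uniform (so $(g,x)\mapsto gx$ is uniformly continuous), hence $x\mapsto \rho(gx,gy)$ is controlled uniformly in $g$ on the support-relevant region; one integrates against $f$ and uses dominated convergence. That $\rho_f$ is a genuine metric (not just a pseudo-metric) uses the \emph{generating} hypothesis: if $x\ne y$ there is some $g_0\in G$ with $\rho(g_0 x, g_0 y)>0$, and by continuity of $g\mapsto \rho(gx,gy)$ this positivity persists on an open neighborhood of $g_0$; since $f>0$ everywhere, the integral $\int \rho(gx,gy) f(g)\,\dee\Haar(g)$ is strictly positive. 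Symmetry and the triangle inequality are inherited pointwise from $\rho$ under the integral.

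For the entropy equality, I would compare the induced pseudo-metrics $\rho^{M}$ and $\rho_f^{M}$ on $\Map(M,X)$ and show each is controlled by the other up to the kind of error that washes out in the entropy limits. The inequality $h_\Si(G,X,\rho_f)\le h_\Si(G,X,\rho)$ should be the easier direction: using Proposition \ref{P:weakened}, it suffices to control $\rho_f^{M_i}$-separated sets of $(U,\d,\rho^{M_i})$-equivariant-on-average maps. Concretely, if $\phi$ is highly equivariant-on-average with respect to $\rho$ over a large precompact $U$, then one should be able to bound $\rho_f^{M_i}(\phi,\psi)$ from above in terms of $\rho^{M_i}(\phi,\psi)$ plus a small error, by writing $\rho_f^{M_i}(\phi,\psi)=\int f(g)\,\rho^{M_i}(g\circ\phi, g\circ\psi)\,\dee\Haar(g)$, then using equivariance-on-average to replace $g\circ\phi$ by $\phi\circ g$ (and similarly for $\psi$) inside a large-Haar-measure set of $g$'s, and finally using that $\rho^{M_i}(\phi\circ g,\psi\circ g)$ is close to $\rho^{M_i}(\phi,\psi)$ because $g$ acts in a locally measure-preserving way (Lemma \ref{L:mp}) on the $(U,\eps)$-sofic-approximation part of $M_i$. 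So a $\rho_f^{M_i}$-separated set of size $N$ yields a $\rho^{M_i}$-separated set of comparable size at a slightly worse scale $\eps$, and letting $\eps\searrow 0$ finishes this direction.

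For the reverse inequality $h_\Si(G,X,\rho)\le h_\Si(G,X,\rho_f)$, the symmetric argument requires the opposite comparison: $\rho^{M_i}(\phi,\psi)$ small forces $\rho_f^{M_i}(\phi,\psi)$ small and vice versa. The clean way is to observe that, again by Lemma \ref{L:mp} and $\int f\,\dee\Haar = 1$, for maps that are sufficiently equivariant-on-average one has $\rho_f^{M_i}(\phi,\psi)\approx \rho^{M_i}(\phi,\psi)$ up to errors going to $0$ as the sofic approximation improves and $U$ grows; this uses that $\int f(g)\rho^{M_i}(g\circ\phi,g\circ\psi)\dee\Haar(g)$ can be compared to $\int f(g)\rho^{M_i}(\phi\circ g, \psi\circ g)\dee\Haar(g)$, and the latter integrand equals $\rho^{M_i}(\phi,\psi)$ up to a boundary correction once $g$ ranges over a set where the action is locally measure-preserving and associative. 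One then runs the entropy definitions through, again invoking Proposition \ref{P:weakened} to replace $\Map$ by $\Map_{\avg}$ so that the equivariance-on-average hypotheses are available, and matches separated sets for the two pseudo-metrics. The main obstacle I anticipate is the bookkeeping in this step: carefully quantifying how the choice of a precompact $U$ containing (most of) the essential support of $f$, together with the $(U,\eps_i)$-sofic-approximation property, makes the substitution $g\circ\phi \leftrightarrow \phi\circ g$ legitimate under the integral against $f$, with all error terms controlled simultaneously by a single $\d$ and a single threshold $i > I$. Everything else is routine once that comparison is pinned down.
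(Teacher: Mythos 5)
Your proposal is correct and follows essentially the same route as the paper: it establishes that $\rho_f$ is a uniformly continuous $1$-bounded metric exactly as you describe, and then compares $\rho^{M_i}$ with $\rho_f^{M_i}$ via the identity $\rho_f^{M_i}(\phi,\psi)=\int f(g)\,\rho^{M_i}(g\circ\phi,g\circ\psi)\,\dee\Haar(g)$, truncation of $f$ to a large pre-compact window $W$, the substitution $g\circ\phi\leftrightarrow\phi\circ g$ justified by approximate equivariance over an enlarged set, and Lemma \ref{L:mp}, all funneled through Proposition \ref{P:weakened}. The only organizational difference is that the paper isolates an \emph{unconditional} comparison (its Claim 3: $\rho^{M_i}(\phi,\psi)>\eps$ forces $\rho_f^{M_i}(\phi,\psi)>\eps_f$ for arbitrary measurable maps, no equivariance needed, since $\rho_f\ge c\,\rho$ near the identity because $f>0$ is continuous) and reuses it both for the separation count in one direction and for the model-space inclusion in the other, whereas you route those steps through the equivariance-based two-sided comparison — this still works but incurs the extra bookkeeping you already flag.
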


\begin{proof}
It is straightforward to check that $\rho_f$ is a $1$-bounded uniformly continuous pseudo-metric. To check that it is a metric, suppose $x,y \in X$ and $\rho_f(x,y)=0$. Since the integrand defining $\rho_f(x,y)$ is non-negative, this implies $\rho(gx,gy)f(g)=0$ for a.e. $g$. However $f>0$. Therefore $\rho(gx,gy)=0$ for a.e. $g$. In particular, $\rho(gx,gy)=0$ for all $g$ in a dense subset of $G$. Since $\rho$ and the action are continuous, the map $g \mapsto \rho(gx,gy)$ is continuous. Therefore, $\rho(gx,gy)=0$ for all $g \in G$. Since $\rho$ is generating, this implies $x=y$. Therefore $\rho_f$ is a metric. 


Next, we check the inequality $h_\Si\left(G,X, \rho\right) \le h_\Si\left(G,X, \rho_f\right)$. To see this, let $U_f \subset G$ be a pre-compact neighborhood of the identity and $\d_f>0$.

\noindent {\bf Claim 1}. There exist $\d$ and a pre-compact open $W \subset G$ satisfying

\begin{itemize}
\item $U_f \subset W$;
\item $0<\d <\d_f/10$;
\item if $x,y \in X$ and $\Haar(\{g\in W:~\rho(gx,gy)>\d\})<\d\Haar(W)$ then $\rho_f(x,y)< \d_f/2$.
\end{itemize}

\begin{proof}[Proof of Claim 1]
Choose $W \supset U_f$ large enough so that
$$\int_{G \setminus W} f(g)~\dee\Haar(g) < \d_f/10.$$
Choose $\d>0$ small enough so that $\d <\d_f/10$ and if $V \subset W$ is any set with Haar measure $\le \d\Haar(W)$ then  
$$\int_V f(g)~\dee\Haar(g) < \d_f/10.$$

We claim the last condition is now satisfied. To check this, let $x,y \in X$ and $V = \{g\in W:~\rho(gx,gy)>\d\}$. Suppose $\Haar(V)<\d \Haar(W)$. Let $I(g) = \rho(gx,gy)f(g)$. Because $\rho$ is $1$-bounded, 
\begin{eqnarray*}
\rho_f(x,y) &=& \int_{G \setminus W}I(g) ~\dee\Haar(g) +\int_{W \setminus V} I(g) ~\dee\Haar(g) + \int_{V} I(g) ~\dee\Haar(g)\\
&\le& \d_f/10 +  \d + \d_f/10   \le 3\d_f/10 < \d_f/2.
\end{eqnarray*}
\end{proof}

Let $U=WU_f$. 

\noindent {\bf Claim 2}.
$$\Map\left(M_i,X,\rho:U,\delta^{10}\right) \subset \Map_{\avg}\left(M_i,X,\rho_f : U_f,\delta_f\right).$$

\begin{proof}[Proof of Claim 2]
Fix $\phi \in \Map\left(M_i,X, \rho:U,\delta^{10}\right)$. It suffices to show
$$\d_f \Haar(U_f) > \int_{U_f}  \rho^{M_i}_f(g \circ \phi, \phi\circ g)~\dee\Haar(g).$$
For $g \in U_f$ and $p \in M_i$, let
\begin{eqnarray*}
W(g,p) &=& \left\{h \in W:~\rho(h g\phi(p), h \phi(g.p))\le \d^4 \right\}\\
\tM_i(g) &=& \left\{p \in M_i:~  \Haar(W(g,p)) \ge (1-\d^4) \Haar(W)\right\}.
\end{eqnarray*}

Because $\phi$ is $(U,\d^{10},\rho^{M_i})$-equivariant, $g\in U_f$ and $WU_f = U$,
\begin{eqnarray*}
&&\d^{10} \Haar(W)  \ge \int_W \rho^{M_i}(h g \circ \phi, h \circ (\phi \circ g))~\dee\Haar(h)   \\
&\ge&\bvol(M_i \setminus M_i[g])\Haar(W) + \int_{M_i[g] \setminus \tM_i(g)}  \int_{W \setminus W(g,p)}  \rho(h g\phi(p), h \phi(g.p))~\dee\Haar(h)~\dee\bvol(p)   \\
&\ge&\bvol(M_i \setminus M_i[g])\Haar(W) + \bvol(M_i[g] \setminus \tM_i(g)) \d^{8} \Haar(W). 
\end{eqnarray*}
Solve for $\bvol(\tM_i(g))$ to obtain
\begin{eqnarray*}
\bvol(\tM_i(g)) &\ge& 1 + (1-\d^{-8})\bvol(M_i[g]) - \d^2 \ge 1-\d^2.
\end{eqnarray*}
By Claim 1, if $p \in \tM_i(g)$ then $\rho_f(g \phi(p), \phi(g.p)) < \d_f/2$. Because $\rho_f$ is $1$-bounded, 
$$\int_{U_f} \rho_f^{M_i}( g\circ \phi, \phi \circ g) ~\dee\Haar(g) < (\d_f/2 + \bvol(M_i \setminus \tM_i(g)) )\Haar(U_f) < \d_f \Haar(U_f)$$
 as required.

\end{proof}

By uniform continuity of the $G$-action and continuity of $f$, for any $\eps'>0$ there is an $\eps'_f>0$ such that if $\rho(x,y)>\eps'$ then $\rho_f(x,y)>\eps'_f$. We integrate this inequality over a local $G$-space $M_i$ next.

\noindent {\bf Claim 3}.
For every $\eps>0$ there exists $\eps_f>0$ such that if $\phi, \psi \in \Map(M_i,X)$ satisfy $\rho^{M_i}(\phi,\psi)>\eps$ then $\rho^{M_i}_f(\phi,\psi)>\eps_f$. 

\begin{proof}[Proof of Claim 3]
$\rho^{M_i}(\phi,\psi)>\eps$ implies, via Markov's inequality, that for any $t>0$
\begin{eqnarray*}
\bvol(\{ p \in M_i:~ 1-\rho(\phi(p),\psi(p)) \ge t \}) &\le& \frac{ 1-\rho^{M_i}(\phi, \psi) }{t}< \frac{1-\eps}{t}.
\end{eqnarray*}
Set $t=\sqrt{1-\eps}$. By the paragraph above Claim 3 (with $\eps'=1-t$), we obtain $\eps'_f$ such that if $\rho(x,y)>\eps'$ then $\rho_f(x,y)>\eps'_f$. So if $\rho^{M_i}(\phi,\psi)>\eps$ then $\rho_f(\phi(p),\psi(p))> \eps'_f$ on a set of measure at least $1 - \frac{1-\eps}{t}$. Thus
$$\rho^{M_i}_f(x,y)> \eps'_f\left(1 - \frac{1-\eps}{t}\right) =\eps'_f(1- \sqrt{1-\eps}).$$

\end{proof}

It follows from Claims 2 and 3 that
$$\sep_\epsilon\left(\Map\left(M_i,X,\rho:U,\delta^{10}\right)\right) \le \sep_{\epsilon_f}\left(\Map_{\avg}\left(M_i,X,\rho_f:U_f,\delta_f\right)\right).$$
We can now take the logarithm of both sides, divide by $\vol(M_i)$, take the limsup as $i\to\infty$, the inf over $U,\d$, the inf over $U_f,\d_f$, the sup over $\eps_f$ and then the sup over $\eps$ to obtain 
$$h_\Si\left(G,X, \rho\right) \le h_{\Si,\avg}\left(G,X, \rho_f\right) = h_\Si\left(G,X, \rho_f\right)$$
 where we have used Proposition \ref{P:weakened} in the last equality.

To obtain the opposite inequality, we forget the previous choices of $U_f, \d_f, \d, W$ etc. The contrapositive of Claim 3 implies for all $\d>0$ there exist $\k>0$ such that if $\phi, \psi \in \Map(M_i,X)$ satisfy $\rho^{M_i}_f(\phi,\psi)<\k$ then $\rho^{M_i}(\phi,\psi)<\d$. Thus
\begin{eqnarray}\label{E:modelinclusion}
\Map\left(M_i,X,\rho:U,\delta\right) \supset \Map\left(M_i,X,\rho_f:U_f,\delta_f\right)
\end{eqnarray}
whenever $U \subset U_f$ and $\d_f \le \k$.  


\noindent {\bf Claim 4}. Let $U,\d,\eps_f$ be given. Then there exist $U_f, \d_f,\eps, I$ such that if $i>I$ then $\forall \phi, \psi \in \Map\left(M_i,X,\rho:U_f,\delta_f\right)$, if $\rho^{M_i}(\phi,\psi)\le \eps$ then $\rho^{M_i}_f(\phi,\psi)\le \eps_f$. 

\begin{proof}[Proof of Claim 4]
First we choose the parameters $U_f, \d_f,\eps,I$. 

By Claim 1, there exist $\eta>0$ and $W \subset G$ satisfying $U \subset W$, and if $x,y \in X$ and $\Haar(\{g\in W:\rho(gx,gy)>\eta\})<\eta \Haar(W)$ then $\rho_f(x,y)<\eps_f/10$. 

Let $U_f \supset W$ be a pre-compact open set. Choose $\d_f, \eps>0$ so that
$$2\d_f+11\eps/10< \eta^2\eps_f/10.$$ 
Let $i$ be large enough so that $\vol(M_i\setminus M_i[W])<\eps/10$. 

Let $\phi, \psi \in \Map\left(M_i,X,\rho:U_f,\delta_f\right)$ and suppose $\rho^{M_i}(\phi,\psi)\le \eps$. It suffices to show $\rho^{M_i}_f(\phi,\psi)\le \eps_f$. To do this, let
$$M(\eta) = \left\{p \in M_i:~ \Haar(\{g\in W:~\rho(g\phi(p),g\psi(p))>\eta\})<\eta\Haar(W)\right\}.$$
The constant $\eta$ was chosen so that if $p \in M(\eta)$ then $\rho_f(\phi(p),\psi(p))<\eps_f/10$. So it suffices to prove $\vol(M(\eta))>(1-\eps_f/10)\vol(M_i).$ We will obtain this estimate using what we know about $\phi$ and $\psi$ as follows.

By the triangle inequality,
\begin{eqnarray*}
&&\int_W \rho^{M_i}(g \circ \phi, g\circ \psi)~\dee\Haar(g) \\
&\le& \int_W \left[\rho^{M_i}(g \circ \phi, \phi \circ g) +\rho^{M_i}(\phi \circ g, \psi \circ g) +\rho^{M_i}(\psi \circ g, g\circ \psi)\right]~ \dee\Haar(g).
\end{eqnarray*}
Because $W \subset U_f$,
$$ \int_W \left[\rho^{M_i}(g \circ \phi, \phi \circ g)  +\rho^{M_i}(\psi \circ g, g\circ \psi)\right]~ \dee\Haar(g) \le 2\d_f \Haar(W).$$
Because $\rho^{M_i}(\phi,\psi)\le \eps$ and $\vol(M_i\setminus M_i[W])<\eps/10$,
\begin{eqnarray*}
&& \int_W \rho^{M_i}(\phi \circ g, \psi \circ g) ~ \dee\Haar(g) \\
&\le& \bvol(M_i \setminus M_i[W])\Haar(W) + \int_W \int_{M_i[W]} \rho( \phi(g.p), \psi(g.p))~\dee\bvol(p)~\dee\Haar(g)\\
 &\le& (\eps +\eps/10)\Haar(W)
\end{eqnarray*}
where the last inequality uses Lemma \ref{L:mp} (that the measure on $M_i$ is locally invariant under the partial action).
Combine this with the previous inequalities to obtain
$$\int_W \rho^{M_i}(g \circ \phi, g\circ \psi)~\dee\Haar(g) \le (2\d_f+11\eps/10)\Haar(W).$$
On the other hand, 
\begin{eqnarray*}
\int_W \rho^{M_i}(g \circ \phi, g\circ \psi)~\dee\Haar(g) & \ge & \int_{M_i\setminus M(\eta)} \int_W \rho(g \phi(p), g\psi(p))~\dee\Haar(g) ~\dee\bvol(p)\\
&\ge& \bvol(M_i \setminus M(\eta)) \Haar(W) \eta^2.
\end{eqnarray*}
Thus 
$$\bvol(M(\eta)) \ge \left(1- \frac{2\d_f+11\eps/10}{\eta^2}\right) > 1-\eps_f/10.$$ 
as required.

\end{proof}

It follows from (\ref{E:modelinclusion}) and Claim 4 that given $U,\d,\eps_f$ there exist $U_f,\d_f,\eps$ and $I$ such that if $i>I$ then
$$\sep_\epsilon\left(\Map\left(M_i,X,\rho:U,\delta\right)\right) \ge \sep_{\epsilon_f}\left(\Map\left(M_i,X,\rho:U_f,\delta_f\right)\right).$$
We can now take the logarithm of both sides, divide by $\vol(M_i)$, take the limsup as $i\to\infty$, the inf over $U_f,\d_f$, the inf over $U,\d$, the sup over $\eps$ and then the sup over $\eps_f$ to obtain $h_\Si\left(G,X, \rho\right) \ge h_\Si\left(G,X, \rho_f\right)$.

\end{proof}

\subsection{Proof of Theorem \ref{thm:top}}

\begin{proof}[Proof of Theorem \ref{thm:top}]

Let $\Phi:X \to Y$ be a uniform conjugacy between $G$ actions $G \cc (X,\rho_X)$ and $G \cc (Y,\rho_Y)$. By Lemma \ref{lem:metric}, we may assume $\rho_X$ and $\rho_Y$ are metrics.  

Given a local $G$-space $M$, let $\Phi_M:\Map(M,X) \to \Map(M,Y)$ be composition with $\Phi$. This means $\Phi_M(\phi) = \Phi \circ \phi$. Because $\Phi$ is uniformly continuous, the family of maps $\{\Phi_M\}_M$ is uniformly equicontinuous. To be precise this means: for every $\eps_Y>0$ there exists $\eps_X>0$ such that if $\rho^{M}_X(\phi,\psi)<\eps_X$ then $\rho^{M}_Y(\Phi\circ \phi, \Phi\circ \psi)<\eps_Y$ (and vice versa). Moreover, $\eps_X$ does not depend on $M$.  

Let $U_Y \subset G$ be a pre-compact neighborhood of the identity and $\d_Y>0$. By the previous paragraph, there exists $\d_X>0$ such that if $U_X \supset U_Y$ then $\Phi_M$  embeds $\Map\left(M,X,\rho_X:U_X,\delta_X\right)$ into $\Map\left(M,Y,\rho_Y:U_Y,\delta_Y\right).$ 

Moreover, for every $\eps_X>0$ there exists $\eps_Y>0$ such that if $\phi, \psi \in \Map(M_i,X)$ satisfy $\rho^{M_i}_X(\phi,\psi)>\eps_X$ then $\rho^{M_i}_Y(\Phi \circ \phi,\Phi \circ \psi)>\eps_Y$. Thus if $\cS \subset \Map\left(M,X,\rho_X:U,\delta\right)$ is $(\rho^{M_i}_X,\eps_X)$-separated, then $\Phi_M(\cS)$ is $(\rho^{M_i}_Y,\eps_Y)$-separated.

As in the proof of Lemma \ref{lem:metric}, we now have
$$\sep_{\epsilon_X}\left(\Map\left(M_i,X,\rho_X:U_X,\delta_X\right)\right) \le \sep_{\epsilon_Y}\left(\Map\left(M_i,Y,\rho_Y:U_Y,\delta_Y\right)\right)$$
for all indices $i$. Now we take the logarithm of both sides, divide by $\vol(M_i)$, take the limsup as $i\to\infty$, the inf over $U_X,\d_X$, the inf over $U_Y,\d_Y$, the sup over $\eps_Y$ and then the sup over $\eps_X$ to obtain $h_\Si\left(G,X, \rho_X\right) \le h_\Si\left(G,Y, \rho_Y\right)$. By symmetry, the theorem is proved.

\end{proof}

\section{Preliminaries to measure entropy theory}\label{S:ptmet}

\subsection{Space of probability measures}

Let $X$ be a Polish space and $\Prob(X)$ denote the space of all Borel probability measures on $X$ with the weak* topology. This is the smallest topology such that for every bounded continuous function $f:X \to \R$ the map $\mu \mapsto \int f~\dee\mu$ is continuous on $\Prob(X)$. If $X$ is compact then $\Prob(X)$ is also compact by the Banach-Alaoglu Theorem. 

If $G \cc X$ is a jointly continuous action then let $\Prob_G(X) \subset \Prob(X)$ be the subspace of $G$-invariant Borel probability measures. This space is closed in $\Prob(X)$.

\subsection{Measure conjugacy}\label{S:mc}
In this paper, all probability spaces are standard. We denote such a space by $(X,\cB_X,\mu)$ or just $(X,\mu)$, leaving the sigma-algebra $\cB_X$ implicit. 

An action of $G$ on a space $X$ equipped with a sigma-algebra $\cB_X$ is {\bf measurable} if the action map $G \times X \to X$ defined by $(g,x) \mapsto gx$ is measurable. 

Suppose $G \cc (X,\cB_X,\mu)$ and $G \cc (Y,\cB_Y,\nu)$ are measurable actions on standard probability spaces. A {\bf measure-conjugacy} is a measurable map $\Phi:X \to Y$ with measurable inverse $\Phi^{-1}:Y \to X$ such that $\Phi_*\mu = \nu$ (where $\Phi_*\mu$ is the measure on $Y$ defined by $\Phi_*\mu(E) = \mu(\Phi^{-1}(E))$) and $\Phi(gx)=g \Phi(x)$ for $g\in G$ and $x\in X$.

\section{Measure sofic entropy}\label{S:mse}

The goal of this section is to define measure sofic entropy and prove that it is a measure-conjugacy invariant. The definition is similar to that of topological sofic entropy with the exception that the notion of `good map' must be restricted to maps which nearly take the normalized volume on a local $G$-space $M_i$ to the target measure $\mu$ on $X$. 

To make this precise, let $(X,d_X)$ be a Polish metric space, $\Prob(X)$ denote the space of Borel probability measures on $X$ with the weak* topology, $G \cc X$ a uniform action and $\mu \in \Prob(X)$ a $G$-invariant Borel probability measure on $X$. We will denote the action by the triple $(G,X,\mu)$. Also let $\rho:X\times X \to \R$ be a uniformly continuous, 1-bounded, pseudo-metric. 




\begin{defn}
Let $U\subset G$ be a pre-compact neighborhood of identity, $\cO \subset \Prob(X)$ an open neighborhood of $\mu$, $\eps >0$. Suppose that $M$ is a local $G$-space with finite measure $\vol$. As before we let $\bvol$ denote the normalized volume on $M$, so that $\bvol(M') = \frac{\vol(M')}{\vol(M)}$ for any $M' \subset M$. Denote by $\Map\left(M,X, \rho: U,\delta, \cO \right)$ the set of all maps $\phi \in \Map\left(M,X, \rho: U,\delta \right)$ such that $\phi_*\bvol \in \cO$. 

\end{defn}


\begin{defn}\label{MapEntropy}
 Let $\Sigma=\{M_i\}$ be a sofic approximation to $G$.  Define
\begin{eqnarray*}
h_\Si\left(G,X,\mu, \rho\right)= \lim_{\eps \searrow 0} \inf_\cO \inf_{U\subset G}\inf_{\d>0}\limsup_{i\to \infty}\frac{1}{\ovol\left(M_i\right)}\log \sep_\epsilon\left(\Map\left(M_i,X,\rho:U,\delta,\cO\right)\right)
\end{eqnarray*}
where the second infimum is over all pre-compact neighborhoods $U$ of the identity in $G$ and the  first infimum is over all open neighborhoods $\cO$ of $\mu$ in $\Prob(X)$.\end{defn}


The goal of this section is to prove:

\begin{thm}\label{thm:measure}
Suppose $(G,X,\mu)$ and $(G,Y,\nu)$ are uniformly continuous, pmp actions and $\rho_X,\rho_Y$ are 1-bounded, uniformly continuous, generating, pseudo-metrics on $X$, $Y$ respectively.  If $(G,X,\mu)$ and $(G,Y,\nu)$ are measurably conjugate then $h_\Si(G,X,\mu,\rho_X)=h_\Si(G,Y,\nu,\rho_Y)$. 
\end{thm}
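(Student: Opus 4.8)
The plan is to follow the standard template for conjugacy invariance of sofic entropy, adapted to the measure setting: reduce to the case where the pseudo-metrics are genuine metrics, then transport model spaces across the measure-conjugacy $\Phi$ and show that separated sets map to separated sets while the volume-normalization condition ($\phi_*\bvol \in \cO$) is essentially preserved. The main new ingredient compared with Theorem~\ref{thm:top} is that a measure-conjugacy is only defined almost everywhere and need not be uniformly continuous, so composing with $\Phi$ does not directly yield equivariant maps. The workaround, exactly as in the countable case, is to approximate $\Phi$ by a continuous (indeed Lipschitz, after a suitable change of metric) map off a set of small $\mu$-measure, using Lusin's theorem; the open constraint $\cO \ni \mu$ is what lets us ignore the bad set.

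First I would establish an analogue of Lemma~\ref{lem:metric} in the measure setting: replacing $\rho_X$ by $\rho_{X,f}(x,y) = \int \rho_X(gx,gy) f(g)\,\dee\Haar(g)$ (for continuous $f>0$ with $\int f\,\dee\Haar = 1$) does not change $h_\Si(G,X,\mu,\rho_X)$, and $\rho_{X,f}$ is a metric. The argument is the same as in Lemma~\ref{lem:metric}, using Proposition~\ref{P:weakened}; the only extra point is to check that the constraint $\phi_*\bvol \in \cO$ is unaffected, which is immediate since it does not involve the metric at all. So from now on both $\rho_X$ and $\rho_Y$ are metrics. Next, fix a measure-conjugacy $\Phi : X \to Y$ with measurable inverse, $\Phi_*\mu = \nu$. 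Given a target neighborhood $\cO_Y \ni \nu$ in $\Prob(Y)$, a pre-compact $U_Y$ and $\d_Y > 0$, I want to produce $\cO_X \ni \mu$, $U_X$, $\d_X$ and an injection (on a separated subset) of $\Map(M_i,X,\rho_X:U_X,\d_X,\cO_X)$ into $\Map(M_i,Y,\rho_Y:U_Y,\d_Y,\cO_Y)$ whose image is separated, for all large $i$.

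The heart of the matter is the Lusin approximation. Fix a small $\beta>0$ (to be chosen in terms of $\d_Y,\eps_Y$, and the weak* neighborhood $\cO_Y$). By Lusin's theorem there is a compact $K \subset X$ with $\mu(X\setminus K)<\beta$ such that $\Phi\restriction K$ is continuous, hence uniformly continuous on $K$; extend it to a uniformly continuous $\tilde\Phi : X \to Y$ (Tietze-type extension, using that $\rho_Y$ is a bounded metric and the action space is Polish). For $\phi \in \Map(M_i,X,\rho_X:U_X,\d_X,\cO_X)$ with $\cO_X$ forcing $\bvol(\phi^{-1}(X\setminus K))$ small — which is an open condition on $\phi_*\bvol$ since we may take $\cO_X$ inside $\{\lambda : \lambda(U_\beta) > 1-2\beta\}$ for an open $U_\beta \supset K$ with $\mu(U_\beta \setminus K)$ small — the composition $\tilde\Phi \circ \phi$ agrees with ``$\Phi \circ \phi$'' on all but a $\bvol$-small set of $M_i$. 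Then: (i) $\tilde\Phi\circ\phi$ is $(U_Y,\d_Y,\rho_Y^{M_i})$-equivariant by uniform equicontinuity of $\{\tilde\Phi_M\}$ exactly as in the proof of Theorem~\ref{thm:top}, choosing $\d_X$ small and $U_X \supset U_Y$; (ii) $(\tilde\Phi\circ\phi)_*\bvol$ is weak*-close to $\Phi_*(\phi_*\bvol)$, which is weak*-close to $\Phi_*\mu = \nu$ when $\phi_*\bvol$ is close to $\mu$ and $\phi^{-1}(X\setminus K)$ is $\bvol$-small — using that $\Phi = \tilde\Phi$ on $K$ and pushforward by a fixed measurable map is weak*-continuous on measures supported near $K$; hence $(\tilde\Phi\circ\phi)_*\bvol \in \cO_Y$ for suitable $\cO_X$; (iii) the map $\phi \mapsto \tilde\Phi\circ\phi$ is injective on $(\rho_X^{M_i},\eps_X)$-separated sets with $\eps_X$-separation turning into $\eps_Y$-separation, by applying the same Lusin argument to $\Phi^{-1}$ to get a continuous $\tilde\Psi : Y \to X$ with $\tilde\Psi\circ\tilde\Phi$ close to the identity in $\rho_X^{M_i}$ on the relevant maps, so $\rho_X^{M_i}(\phi,\psi)>\eps_X$ forces $\rho_Y^{M_i}(\tilde\Phi\circ\phi,\tilde\Phi\circ\psi)$ bounded below. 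Taking logs, dividing by $\vol(M_i)$, and passing to the iterated $\limsup$/$\inf$/$\lim$ gives $h_\Si(G,X,\mu,\rho_X) \le h_\Si(G,Y,\nu,\rho_Y)$; symmetry finishes the proof.

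The main obstacle is step (iii) combined with the two-sided bookkeeping: one must choose $\tilde\Phi$ and $\tilde\Psi$ simultaneously so that $\tilde\Psi\circ\tilde\Phi$ approximates $\mathrm{id}_X$ on the image of the $G$-space (in the $\rho_X^{M_i}$ sense) uniformly in $i$, which requires the Lusin sets $K_X \subset X$ and $K_Y \subset Y$ to be chosen compatibly (e.g. shrink $K_X$ so that $\Phi(K_X) \subset K_Y$ and $\mu(X \setminus (K_X \cap \Phi^{-1}(K_Y)))$ is small) and requires controlling $\bvol$ of the preimages under $\phi$ of these bad sets purely through the open constraint $\cO_X$ — this is where the definition of measure sofic entropy, with its infimum over $\cO$, is used essentially and cannot be replaced by the topological definition. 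Everything else is a routine repetition of the estimates in the proofs of Lemma~\ref{lem:metric} and Theorem~\ref{thm:top}.
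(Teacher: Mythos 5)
Your overall architecture (reduce to metrics, then transport microstates through the conjugacy using a Lusin set) matches the paper's, but there is a genuine gap at the point you yourself identify as the crux: you cannot control $\bvol(\phi^{-1}(X\setminus K))$ for the compact Lusin set $K$ ``purely through the open constraint $\cO_X$.'' For a closed set $K$, the function $\lambda \mapsto \lambda(K)$ is only upper semicontinuous in the weak* topology, so a \emph{lower} bound $\lambda(K) > 1-\beta$ is not an open condition and is not implied by any weak* neighborhood of $\mu$ (and $K$ cannot in general be taken to be a $\mu$-continuity set, e.g.\ when $\mathrm{supp}\,\mu$ is nowhere dense or $\Phi$ is badly discontinuous). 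Your proposed surrogate $\{\lambda : \lambda(U_\beta) > 1-2\beta\}$ for an open $U_\beta \supset K$ is indeed weak*-open, but it permits $\phi_*\bvol$ to concentrate on $U_\beta \setminus K$, where $\Phi$ is completely uncontrolled; so steps (ii) and (iii) do not close. The escape route via a globally defined uniformly continuous $\tilde\Phi$ also fails as stated: a Tietze/Dugundji-type extension of $\Phi\restriction K$ requires the target to be an absolute retract (or at least real-valued), and a general Polish $Y$ is not one (already $Y$ disconnected gives counterexamples), so the extension on which (ii) and (iii) lean need not exist.

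The paper resolves exactly this difficulty by a step your proposal omits: Proposition \ref{prop:abs-cont} shows (using Lemma \ref{L:perturb0}, Lemma \ref{L:perturb1}, and non-atomicity of $\vol_{M_i}$ from Lemma \ref{lem:nonatomic} when $G$ is non-discrete) that the entropy can be computed from microstates with $\phi_*\bvol = \mu$ \emph{exactly}. Once $\phi_*\bvol=\mu$, one gets $\bvol(\phi^{-1}(L)) = \mu(L) \ge 1-\tau$ for the Lusin set $L$ as an identity rather than as a weak* approximation, the composition $\Phi\circ\phi$ needs no continuous extension (the bad set is handled by the $1$-boundedness of $\rho_Y$), and $(\Phi\circ\phi)_*\bvol = \nu$ exactly, so the target constraint is automatic. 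To repair your argument you would need to prove this measure-preserving-microstate reduction (or an equivalent device converting the open constraint into exact mass on closed sets); without it the Lusin step cannot be made to interact correctly with the weak* topology.
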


\begin{defn}
After this theorem has been proven we will write $h_\Si(G,X,\mu) = h_\Si(G,X,\mu,\rho_X)$ where $\rho_X$ is any choice of 1-bounded, uniformly continuous, generating, pseudo-metric. This is the {\bf measure $\Si$-entropy} of the action $G \cc (X,\mu)$. It may depend on the choice of sofic approximation $\Si$ and it may take on the value $-\infty$ (this occurs whenever $\Map(M_i,X,\rho:U,\delta,\cO)$ is empty for some choice of $U,\d,\cO$ and all sufficiently large $i$). 
\end{defn}

\begin{remark}
If $G$ is a countable group then the definition of $h_\Si(G,X,\mu,\rho)$ reduces in a straightforward way to the standard definition of measure sofic entropy, as presented in \cite{MR3616077} for example. 
\end{remark}

The first step of the proof is to show we can assume $\rho$ is a metric without loss of generality.

\begin{lem}\label{lem:metric2}
If $\rho$ is a uniformly continuous, generating, 1-bounded, pseudo-metric for the action $G \cc X$ then there exists a uniformly continuous, 1-bounded metric $\rho'$ such that $h_\Si\left(G,X,\mu, \rho\right)= h_\Si\left(G,X,\mu, \rho'\right)$.
\end{lem}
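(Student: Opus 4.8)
The plan is to reprise the proof of Lemma \ref{lem:metric}, checking at each step that the empirical-measure constraint simply comes along for the ride, so that the whole argument reduces to the one already given. Fix a continuous $f:G\to(0,\infty)$ with $\int f\,\dee\Haar=1$ and put $\rho':=\rho_f$, where $\rho_f(x,y)=\int\rho(gx,gy)f(g)\,\dee\Haar(g)$ as in Lemma \ref{lem:metric}. That lemma already shows $\rho_f$ is a uniformly continuous, $1$-bounded metric (this is where we use that $\rho$ is generating), so it only remains to prove $h_\Si(G,X,\mu,\rho)=h_\Si(G,X,\mu,\rho_f)$.

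The key point is that the condition $\phi_*\bvol\in\cO$ depends only on the map $\phi$, not on any pseudo-metric used to measure equivariance or separation. Hence every inclusion and every separation comparison proved inside Lemma \ref{lem:metric} between spaces such as $\Map(M_i,X,\rho:U,\delta)$ and $\Map_{\avg}(M_i,X,\rho_f:U_f,\delta_f)$ remains valid verbatim after intersecting each such space with $\{\phi:\phi_*\bvol\in\cO\}$, for a fixed open $\cO\ni\mu$. First I would record the measure analog of Proposition \ref{P:weakened}: defining $\Map_{\avg}(M,X,\rho:U,\delta,\cO)$ to be the maps in $\Map_{\avg}(M,X,\rho:U,\delta)$ whose empirical measure lies in $\cO$, and $h_{\Si,\avg}(G,X,\mu,\rho)$ with the same outer $\inf_\cO$ as in Definition \ref{MapEntropy}, one has $h_\Si(G,X,\mu,\rho)=h_{\Si,\avg}(G,X,\mu,\rho)$. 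Its proof is word-for-word that of Proposition \ref{P:weakened}: the inclusion $\Map_{\avg}(M_i,X,\rho:\tU,\tdelta^{10})\subset\Map(M_i,X,\rho:U,\delta)$ produced there holds as stated, so intersecting both sides with $\{\phi:\phi_*\bvol\in\cO\}$ gives the corresponding inclusion of $\cO$-constrained spaces with the \emph{same} $\cO$ on each side, after which the $\sep$-counting, the passage to logarithms, division by $\vol(M_i)$, $\limsup_i$, and all the infima — now including the outer $\inf_\cO$ — go through unchanged.

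With that in hand the two inequalities are exactly those of Lemma \ref{lem:metric} with $\cO$ carried through. For $h_\Si(G,X,\mu,\rho)\le h_\Si(G,X,\mu,\rho_f)$: Claims 1 and 2 of Lemma \ref{lem:metric} give $\Map(M_i,X,\rho:U,\delta^{10},\cO)\subset\Map_{\avg}(M_i,X,\rho_f:U_f,\delta_f,\cO)$, and Claim 3 (a statement about $\rho^{M_i}$ versus $\rho_f^{M_i}$ valid for all maps $M_i\to X$) turns a $(\rho^{M_i},\eps)$-separated subset of the left-hand space into a $(\rho_f^{M_i},\eps_f)$-separated subset of the right-hand space, whence $\sep_\eps(\Map(M_i,X,\rho:U,\delta^{10},\cO))\le\sep_{\eps_f}(\Map_{\avg}(M_i,X,\rho_f:U_f,\delta_f,\cO))$; taking the usual limits and infima and invoking the measure analog of Proposition \ref{P:weakened} yields the inequality. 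For the reverse, the contrapositive of Claim 3 gives $\Map(M_i,X,\rho:U,\delta,\cO)\supset\Map(M_i,X,\rho_f:U_f,\delta_f,\cO)$ whenever $U\subset U_f$ and $\delta_f$ is small enough, Claim 4 supplies the matching $\sep$ comparison for the $\cO$-constrained spaces, and the same limit/infimum manipulation produces $h_\Si(G,X,\mu,\rho)\ge h_\Si(G,X,\mu,\rho_f)$. The only genuine work — the ``main obstacle,'' such as it is — is bookkeeping: one must verify that in each step of the original proof the map $\phi$ is merely re-measured by a different pseudo-metric or different parameters and is never modified, so the fixed $\cO$ passes through untouched, and one must respect the order of the infima now that an outer $\inf_\cO$ has been inserted in the definition. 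Since $\cO$ is never altered anywhere, both points are routine.
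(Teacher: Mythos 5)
Your proposal is correct and matches the paper's approach: the paper's own proof of this lemma is literally the one-line remark that it is ``nearly identical to the proof of Lemma \ref{lem:metric}, details left to the reader,'' and what you have written is precisely that verification, correctly observing that the constraint $\phi_*\bvol\in\cO$ is never disturbed because no step of Lemma \ref{lem:metric} ever modifies a microstate, only re-measures it. The bookkeeping with the outer $\inf_\cO$ and the measure analogue of Proposition \ref{P:weakened} is handled correctly.
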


\begin{proof}
The proof is nearly identical to the proof of Lemma \ref{lem:metric}. Details are left to the  reader.
\end{proof}
For the rest of this section, we fix a uniformly continuous, 1-bounded metric $\rho$ on $X$. 




\subsection{Entropy through measure-preserving microstates}

The next step is to show we can restrict to maps $\phi:M_i \to X$ which take the normalized volume $\bvol$ to $\mu$ {\em exactly}. This will be used in the proof of Theorem \ref{thm:measure}.

\begin{defn}
Let $\Map_{\textrm{mp}}\left(M_i, X,\rho:U,\delta\right)$ be the set of all $\phi \in \Map\left(M_i, X,\rho:U,\delta\right)$ such that $\phi_*\bvol=\mu$. Informally, elements of $\Map_{\textrm{mp}}\left(M_i, X,\rho:U,\delta\right)$ are called {\bf measure-preserving microstates}. We let
\begin{eqnarray*}
h^{\textrm{mp}}_\Si\left(G,X,\mu, \rho\right)= \lim_{\eps \searrow 0} \inf_{U\subset G}\inf_{\d>0}\limsup_{i\to \infty}\frac{1}{\ovol\left(M_i\right)}\log \sep_\epsilon\left(\Map_{\textrm{mp}}\left(M_i,X,\rho:U,\delta\right)\right)
\end{eqnarray*}
denote the corresponding entropy.
\end{defn}

\begin{prop}\label{prop:abs-cont}
Suppose that for every $i$, $\vol_{M_i}$ has no atoms. Then measure entropy can be computed using measure-preserving microstates. More precisely,
\begin{eqnarray*}
h_\Si\left(G,X,\mu, \rho\right)= h^{\textnormal{mp}}_\Si\left(G,X,\mu, \rho\right).
\end{eqnarray*}
\end{prop}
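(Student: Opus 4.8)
The inequality $h^{\textrm{mp}}_\Si \le h_\Si$ is immediate, since $\Map_{\textrm{mp}}(M_i,X,\rho:U,\delta) \subset \Map(M_i,X,\rho:U,\delta,\cO)$ for every open $\cO \ni \mu$: a measure-preserving microstate has empirical measure exactly $\mu \in \cO$, and it is a $(U,\delta,\rho^{M_i})$-equivariant map, so separated sets of measure-preserving microstates are separated sets inside each model space appearing in the definition of $h_\Si$. Taking logs, normalizing, and applying the appropriate limits/infima gives the bound.

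For the reverse inequality $h_\Si \le h^{\textrm{mp}}_\Si$, the plan is: given a good map $\phi \in \Map(M_i,X,\rho:U,\delta,\cO)$, perturb it on a small-volume set to produce a nearby map $\phi'$ with $\phi'_*\bvol = \mu$ exactly, while keeping $\phi'$ inside $\Map_{\textrm{mp}}(M_i,X,\rho:U',\delta')$ for slightly relaxed parameters, and with $\rho^{M_i}(\phi,\phi')$ small so that separated sets do not collapse too much. First I would fix a finite $(\rho,\eps/4)$-cover (equivalently a finite measurable partition $X = \bigsqcup_{k=1}^m A_k$ with $\diam_\rho(A_k) < \eps/4$), chosen so the partition is also fine enough that membership of a measure in $\cO$ is controlled by the vector of masses $(\mu(A_1),\dots,\mu(A_m))$ up to the tolerance built into $\cO$ — this uses that weak* neighborhoods of $\mu$ contain basic neighborhoods determined by finitely many continuous functions, which can be approximated by indicator functions of a fine enough partition (adjusting the $A_k$ so that $\mu(\partial A_k)=0$ is not needed here since we only push forward atomless measures). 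The key point is that since $\phi_*\bvol \in \cO$, the mass vector $(\bvol(\phi^{-1}A_k))_k$ is close to $(\mu(A_k))_k$; because $\vol_{M_i}$ has no atoms, each level set $\phi^{-1}(A_k)$ is an atomless measure space, so I can carve out or add small measurable sub-pieces to adjust each $\bvol(\phi^{-1}A_k)$ to exactly $\mu(A_k)$, redefining $\phi$ on the affected pieces (of total $\bvol$-mass as small as we like, say $<\delta'$) by selecting values inside the relevant $A_k$. This yields $\phi'$ with $\phi'_*\bvol$ having the same mass vector as $\mu$ on the partition; a second, finer iteration (or a limiting/exhaustion argument over a refining sequence of partitions, using atomlessness at each stage à la a Lyapunov-type argument) upgrades "same mass vector on a fixed partition" to "$\phi'_*\bvol = \mu$ exactly." Throughout, $\rho^{M_i}(\phi,\phi')$ is bounded by $\bvol(\{p : \phi(p)\ne\phi'(p)\})$, which is controlled by the total mass of the modified pieces.

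Once $\phi'$ is constructed, I would check: (i) $\rho^{M_i}(\phi,\phi') < \eps/2$, so a $(\rho^{M_i},\eps)$-separated set of $\phi$'s maps to a $(\rho^{M_i},\eps/2)$-separated set of $\phi'$'s of the same cardinality (hence $\sep_\eps(\Map(M_i,X,\rho:U,\delta,\cO)) \le \sep_{\eps/2}(\Map_{\textrm{mp}}(M_i,X,\rho:U,\delta'))$ for suitable $\delta'$ once $\delta$ is small relative to the allowed perturbation); (ii) $\phi'$ remains $(U,\delta',\rho^{M_i})$-equivariant — here one uses the triangle inequality $\rho^{M_i}(g\circ\phi', \phi'\circ g) \le \rho^{M_i}(g\circ\phi',g\circ\phi) + \rho^{M_i}(g\circ\phi,\phi\circ g) + \rho^{M_i}(\phi\circ g, \phi'\circ g)$, the middle term being $<\delta$, and the outer terms bounded by $\bvol$ of the modified set and its $g$-translate (using Lemma \ref{L:mp}, local volume-preservation, to control $\bvol(g^{-1}.\{\phi\ne\phi'\})$ on $M_i[U]$, plus $\bvol(M_i\setminus M_i[U])$ which is small for large $i$). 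Then take logs, divide by $\vol(M_i)$, and pass to the limits/infima in the right order — $\lim_{\eps\searrow 0}$ outermost, and since the construction needs only a single fixed $\cO$ (the one determined by the partition) together with $U,\delta$ small, the infimum over $\cO$ on the left is handled. The main obstacle I anticipate is the exactness step: arranging $\phi'_*\bvol = \mu$ on the nose rather than merely close, while keeping the perturbation small — this is where atomlessness of $\vol_{M_i}$ is essential, and the cleanest route is probably to do it via a countable refining sequence of partitions and a diagonal/convergence argument, verifying the resulting limit map is still measurable and still close to $\phi$ in $\rho^{M_i}$; care is needed that the perturbations at successive stages are summably small so the process converges.
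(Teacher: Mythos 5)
Your overall strategy is the same as the paper's: the easy inclusion in one direction, and in the other a perturbation of each $\phi\in\Map(M_i,X,\rho:U,\delta,\cO)$ to a nearby exactly measure-preserving $\phi'$, followed by a check (via the triangle inequality and uniform continuity of the action) that $\phi'$ remains $(U,\delta',\rho^{M_i})$-equivariant and that separated sets survive with a slightly smaller separation constant; this last check is precisely Lemma \ref{L:perturb1}. Where you diverge is in the exactness step, which you correctly identify as the crux. The paper's Lemma \ref{L:perturb0} achieves $\phi'_*\bvol=\mu$ in a \emph{single} stage: after fixing one partition $\{P_k\}$ into small-diameter continuity sets and trimming each $Q_k=\phi^{-1}(P_k)$ to $Q_k'$ with $\bvol(Q_k')=\min(\bvol(Q_k),\mu(P_k))$, it uses atomlessness to choose maps $\psi_k:Q_k'\to P_k$ whose pushforwards are absolutely continuous with respect to $\mu$ with density at most $1$, and then lets the small leftover set $Q_0'$ absorb exactly the residual density $1-\sum_k d\psi_{k*}\bvol_k/d\mu$. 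This sidesteps your iteration over refining partitions entirely. Your iterative scheme can be made to work, but note two care points you only partly flag: (a) for the pointwise limit of the $\phi_n$ to exist you need the successive modifications to be Cauchy in a \emph{complete} metric on $X$ (so you should shrink cells in $d_X$-diameter, not merely in $\rho$-diameter, since $\rho$ is just a uniformly continuous pseudo-metric, or metric after Lemma \ref{lem:metric2}, and need not be complete); and (b) to upgrade ``$\phi'_*\bvol$ and $\mu$ agree on every cell of every $\cP_n$'' to ``$\phi'_*\bvol=\mu$'' you should arrange that $\bigcup_n\cP_n$ generates the Borel $\sigma$-algebra and invoke uniqueness of extension. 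Neither is a fatal gap, but the paper's density-bookkeeping construction buys you exactness without any limiting argument, and is the cleaner route.
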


To prove the nontrivial inequality, we show that if $\phi:M \to X$ maps $\bvol$ into a given neighborhood of $\mu$, then there is another map $\psi:M\to X$, close to $\phi$, which maps $\bvol$ to $\mu$. 

\begin{lem}\label{L:perturb0}
Suppose $(M,\vol)$ is a nonatomic finite measure space and $\epsilon>0$. Then there exists an open neighborhood $\cO$ of $\mu$ in $\Prob(X)$ such that for any map $\phi:M \to X$ with  $\phi_*\bvol \in \cO$ there exists  $\psi:M \to X$ such that $\rho^{M}(\phi,\psi)<\epsilon$ and $\psi_*\bvol = \mu$. 
\end{lem}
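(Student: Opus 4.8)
The plan is to discretize the target $\mu$ by a finite partition of $X$, rebalance the corresponding partition of $M$ so that its pieces have exactly the masses prescribed by $\mu$, and then repair $\phi$ inside each piece using the fact that a nonatomic standard probability space admits a measure-preserving Borel map onto any standard probability space of the same total mass.

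First I would fix a finite Borel partition $\cP=\{A_0,A_1,\dots,A_n\}$ of $X$ with $\mu(A_0)<\epsilon/10$, $\diam_\rho(A_j)<\epsilon/4$ for $j\ge 1$, and $\mu(\partial A_j)=0$ for every $j$. This is possible because $\mu$ is tight, so there is a $d_X$-compact set $K$ with $\mu(X\setminus K)<\epsilon/10$; since $\rho$ is continuous on $X\times X$ it is uniformly continuous on $K\times K$, so $K$ is covered by finitely many $d_X$-balls of $\rho$-diameter $<\epsilon/4$, and after disjointifying these and choosing their radii to avoid the countably many $\mu$-charged metric spheres one gets $A_1,\dots,A_n$ with $\mu$-null boundaries; put $A_0=X\setminus\bigcup_{j\ge 1}A_j$, whose boundary lies in $\bigcup_{j\ge 1}\partial A_j$. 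Next, choosing $\eta>0$ small (depending only on $\epsilon$ and $n$), I would build the neighborhood $\cO$ using the null boundaries: for each $j$ pick continuous $f_j\le \mathbf{1}_{A_j}\le g_j$ with $\int g_j\,\dee\mu-\int f_j\,\dee\mu<\eta$ (regularity of $\mu$ plus Urysohn), and let $\cO=\bigcap_j\{\nu:\int f_j\,\dee\nu>\mu(A_j)-\eta,\ \int g_j\,\dee\nu<\mu(A_j)+\eta\}$, a weak* open set containing $\mu$ on which $|\nu(A_j)-\mu(A_j)|<\eta$ for all $j$.

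Now suppose $\phi:M\to X$ has $\nu:=\phi_*\bvol\in\cO$. Put $a_j=\mu(A_j)$ and $b_j=\nu(A_j)=\bvol(\phi^{-1}(A_j))$, so $|a_j-b_j|<\eta$ and $\sum_j a_j=\sum_j b_j=1$. Using nonatomicity of $(M,\bvol)$ I would construct a Borel partition $\{C_0,\dots,C_n\}$ of $M$ with $\bvol(C_j)=a_j$ and $\sum_j\bvol(C_j\setminus\phi^{-1}(A_j))\le\sum_j|a_j-b_j|<(n+1)\eta$: take $C_j\subseteq\phi^{-1}(A_j)$ with $\bvol(C_j)=a_j$ when $a_j\le b_j$, and form the deficient pieces by adjoining to $\phi^{-1}(A_j)$ sets carved (by nonatomicity) from the surplus of those $\phi^{-1}(A_k)$ with $b_k>a_k$. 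For each $j$ with $a_j>0$, the normalization of $(C_j,\bvol|_{C_j})$ is a nonatomic standard probability space and that of $(A_j,\mu|_{A_j})$ is a standard probability space of the same mass $a_j$, so there is a Borel map $\psi_j:C_j\to A_j$ with $(\psi_j)_*(\bvol|_{C_j})=\mu|_{A_j}$; for $j$ with $a_j=0$ we have $C_j=\emptyset$. Define $\psi:M\to X$ by $\psi=\psi_j$ on $C_j$; it is Borel and $\psi_*\bvol=\sum_j(\psi_j)_*(\bvol|_{C_j})=\sum_j\mu|_{A_j}=\mu$. For the estimate, decompose $M$ into the sets $C_j\cap\phi^{-1}(A_j)$ with $j\ge 1$ (there $\phi(p),\psi(p)\in A_j$, hence $\rho(\phi(p),\psi(p))<\epsilon/4$), the set $C_0\cap\phi^{-1}(A_0)$ (of $\bvol$-measure $\le a_0<\epsilon/10$), and $\bigcup_j(C_j\setminus\phi^{-1}(A_j))$ (of $\bvol$-measure $<(n+1)\eta$); using $\rho\le 1$ on the last two, $\rho^M(\phi,\psi)<\epsilon/4+\epsilon/10+(n+1)\eta$, which is $<\epsilon$ once $\eta<\epsilon/(4(n+1))$.

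The construction of $\cP$ and the weak* estimate are routine; the one genuinely load-bearing ingredient is the standard-Borel fact that a nonatomic probability space surjects measure-theoretically onto any probability space of equal mass, which is precisely what allows $\phi$ to be corrected within each atom of $\cP$ at a cost no larger than that atom's $\rho$-diameter. There is no deep obstacle, but the point to get right is the order of choices — $\epsilon$, then $\cP$ (fixing $n$), then $\eta$, then $\cO$ — together with the bookkeeping that keeps the three error sources (the small-diameter pieces, the exceptional piece $A_0$, and the rebalancing defect) simultaneously below $\epsilon$.
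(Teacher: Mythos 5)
Your proof is correct and follows essentially the same route as the paper's: partition $X$ into small-$\rho$-diameter continuity sets, take $\cO$ to control the masses $\phi_*\bvol(A_j)$, and use nonatomicity of $\bvol$ to redefine $\phi$ piecewise so that the pushforward becomes exactly $\mu$. The only (cosmetic) difference is in the rebalancing step: you first adjust the partition of $M$ to have exactly the masses $\mu(A_j)$ and then use exact measure-preserving maps $C_j\to A_j$, whereas the paper keeps subsets $Q_j'\subset\phi^{-1}(P_j)$ of mass $\min(\bvol(\phi^{-1}(P_j)),\mu(P_j))$ and lets a single leftover piece carry a map whose pushforward has the complementary Radon--Nikodym density --- both reduce to the same standard fact about nonatomic spaces.
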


\begin{proof}
Recall that if $Y \subset X$ then $\partial Y = \overline{Y} \cap \overline{X\setminus Y}$ is the {\bf boundary} of $Y$. Moreover, $Y$ is a {\bf continuity set} if $\mu(\partial Y)=0$. It is a standard fact that continuity sets form a dense algebra of the measure algebra. In other words, the collection of continuity sets is closed under finite intersections and unions, and for every $\delta>0$ and Borel set $Y \subset X$ there exists a continuity set $Y' \subset Y$ such that $\mu(Y \vartriangle Y')<\delta$ where $\vartriangle$ denotes symmetric difference. 

Let $\cP=\{P_1, P_2, \ldots \}$ be a partition of $X$ into continuity sets such that each $P_i$ has diameter $<\epsilon/10$ and positive measure. Let $n$ be large enough so that $\mu(\cup_{i=1}^n P_i) > 1-\eps/10$. 

Let $\cO$ be the set of all $\mu' \in \Prob(X)$ such that 
$$|\mu'(P_i) - \mu(P_i)| < \frac{\epsilon}{10} \mu(P_i)$$
for all $1\le i \le n$. By the portmanteau Theorem, $\cO$ is an open neighborhood of $\mu$.

Let $\phi:M \to X$ be a map such that $\phi_*\bvol \in \cO$. It now suffices to construct a map $\psi$ with $\rho^{M}(\phi,\psi)<\epsilon$ and $\psi_*\bvol = \mu$.

Let $Q_i = \phi^{-1}(P_i)$. Let $Q'_i \subset Q_i$ be a subset with 
$$\bvol(Q'_i) = \min( \bvol(Q_i), \mu(P_i)).$$
Let $Q'_0 = M \setminus \cup_{i=1}^n Q'_i$. 

Next, choose a map $\psi:M \to X$ satisfying
\begin{enumerate}
\item $\psi_*\bvol = \mu$,
\item for all $p \in Q'_i$, $\psi(p) \in P_i$.
\end{enumerate}
To see that such a map exists, define a measure $\bvol_i$ on $Q'_i$ by $\bvol_i(E) = \bvol(E \cap Q'_i)$. Because $\bvol(Q'_i) \le \mu(P_i)$ and $\bvol_i$ is non-atomic, there exists a map $\psi_i: Q'_i \to P_i$ such that $\psi_{i*}\bvol_i$ is absolutely continuous to $\mu$ and
$$\frac{d\psi_{i*}\bvol_i}{d\mu}(x)\le 1$$
for all $x \in P_i$. 

Because $\bvol$ is non-atomic, there is also a map $\psi_0:Q'_0 \to X$ such that the pushforward measure $\psi_{0*}\bvol$ is absolutely continuous to  $\mu$ and the Radon-Nikodym derivative satisfies
$$\frac{d\psi_{0*}\bvol}{d\mu}(x)= 1- \sum_{i\ge 1} \frac{d\psi_{i*}\bvol_i}{d\mu}(x)$$
for all $x \in X$. We can define $\psi:M \to X$ by setting $\psi(p)=\psi_i(p)$ for all $p \in Q'_i$ and all $i=0,1,\ldots, n$. 

If $p \in Q'_i$ for some $1 \le i\le n$ then both $\psi(p)$ and $\phi(p)$ are in $P_i$ which has diameter $<\eps/10$ and therefore $\rho(\psi(p),\phi(p))<\epsilon/10$. On the other hand, $\rho$ is 1-bounded. Combining these facts we estimate:
\begin{eqnarray*}
\rho^{M}(\psi, \phi) &=&  \int_{Q_0'} \rho(\psi(p),\phi(p))~\dee\bvol(p) + \sum_{i= 1}^n \int_{Q_i'} \rho(\psi(p),\phi(p))~\dee\bvol(p)\\
&< &  \bvol(Q_0') + \bvol(M\setminus Q_0')\epsilon/10 \le \bvol(Q_0') +\epsilon/10.
\end{eqnarray*}
On the other hand, since $\phi_*\bvol \in \cO$, we have $\bvol(Q_i) = \phi_*\bvol(P_i) \ge (1-\eps/10)\mu(P_i)$ for $1\le i \le n$. So $\bvol(Q'_i)\ge (1-\eps/10)\mu(P_i)$. Adding this up results in
$$\bvol\left(\cup_{i=1}^n Q'_i\right) \ge (1-\eps/10)\mu\left(\cup_{i=1}^n P_i\right) \ge (1-\eps/10)^2 > 1-2\eps/10.$$
So  $\bvol(Q'_0) \le 2\eps/10$. Combined with the previous estimate, this gives
$\rho^{M}(\psi, \phi) \le 3\eps/10 < \eps$ which completes the proof.

\end{proof}

The next lemma shows that a map $\psi$ obtained by perturbing a good map $\phi$ stays good, although with a slightly larger error depending only on the size of the perturbation and the quality of the sofic approximation.

\begin{lem}\label{L:perturb1}
For every pre-compact open neighborhood $U$ of the identity, there exists a function $f_U:(0,\infty)^2 \to \R$ satisfying:
\begin{enumerate}
\item if $M$ is a $(U,\kappa)$-sofic approximation to $G$, $\phi \in \Map(M,X,\rho:U,\d)$, $\psi\in \Map(M,X)$ and $\rho^{M}(\phi,\psi)<\epsilon$ then $\psi \in \Map(M,X,\rho:U, \d + f_U(\k,\eps))$;
\item $\lim_{\k,\eps \searrow 0} f_U(\kappa,\epsilon) =0$. 
\end{enumerate}
\end{lem}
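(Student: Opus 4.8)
The plan is a single three–term triangle inequality, run entirely inside the quasi–metric $\rho^{M}$ of Section \ref{S:rhoM}. Fix $g\in U$; by symmetry of $\rho^{M}$ and the triangle inequality,
$$\rho^{M}(g\circ\psi,\psi\circ g)\ \le\ \rho^{M}(g\circ\psi,g\circ\phi)\ +\ \rho^{M}(g\circ\phi,\phi\circ g)\ +\ \rho^{M}(\phi\circ g,\psi\circ g).$$
The middle term is $<\d$ because $\phi\in\Map(M,X,\rho:U,\d)$. So everything reduces to bounding the two outer terms by quantities that depend only on $\k$ and $\eps$ — not on $M$, $g$, $\phi$ or $\psi$ — and that vanish as $\k,\eps\searrow0$; their sum will serve as $f_U(\k,\eps)$, and this uniformity in $g\in U$ and in $M$ is the whole content.

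For the last term, $\phi\circ g$ and $\psi\circ g$ share the domain $M[g]=\{p:~g.p\text{ is well-defined}\}$, which contains $M[U]$, so $\bvol(M\setminus M[g])\le\k$ because $M$ is a $(U,\k)$-sofic approximation. Since $G$ is unimodular, Lemma \ref{L:mp} shows $p\mapsto g.p$ is a $\bvol$-preserving bijection of $M[g]$ onto $g.M[g]\subseteq M$, hence
$$\int_{M[g]}\rho(\phi(g.p),\psi(g.p))\,\dee\bvol(p)\ \le\ \int_{M}\rho(\phi(q),\psi(q))\,\dee\bvol(q)\ =\ \rho^{M}(\phi,\psi)\ <\ \eps,$$
so $\rho^{M}(\phi\circ g,\psi\circ g)<\k+\eps$. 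For the first term, I would first record the elementary consequence of uniform continuity of $\rho$ and of the action $G\cc X$: for the given $U$ there is a nondecreasing $\omega_U:(0,\infty)\to(0,\infty)$ with $\omega_U(t)\to0$ as $t\searrow0$ such that $\rho(a,b)\le t$ forces $\rho(ga,gb)\le\omega_U(t)$ for all $g\in U$ and $a,b\in X$. Then, since $\rho^{M}(\psi,\phi)<\eps$, Markov's inequality gives $\bvol(B)<\sqrt\eps$ for $B=\{p:~\rho(\psi(p),\phi(p))>\sqrt\eps\}$; on $M\setminus B$ we get $\rho(g\psi(p),g\phi(p))\le\omega_U(\sqrt\eps)$, while on $B$ we use that $\rho$ is $1$-bounded, so
$$\rho^{M}(g\circ\psi,g\circ\phi)=\int_{M}\rho(g\psi(p),g\phi(p))\,\dee\bvol(p)\ \le\ \omega_U(\sqrt\eps)+\sqrt\eps.$$

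Combining the three estimates gives $\rho^{M}(g\circ\psi,\psi\circ g)<\d+f_U(\k,\eps)$ for every $g\in U$, where $f_U(\k,\eps):=\k+\eps+\sqrt\eps+\omega_U(\sqrt\eps)$, and $f_U(\k,\eps)\to0$ as $\k,\eps\searrow0$ since each summand does; this is exactly clauses (1) and (2). The only genuinely delicate point is producing the modulus $\omega_U$ and, through it, an estimate on the first term that is uniform over all local $G$-spaces $M$; this is where the hypotheses that $\rho$ is uniformly continuous and $1$-bounded and that the action is uniform are all used together, in the same spirit as the uniform equicontinuity of the composition operators exploited in the proof of Theorem \ref{thm:top}. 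Everything else is bookkeeping with Lemma \ref{L:mp} and the definition of a $(U,\k)$-sofic approximation.
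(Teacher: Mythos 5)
Your proof is correct and follows essentially the same route as the paper's: the same three-term triangle inequality, the same Markov's inequality split at threshold $\sqrt{\eps}$ combined with a uniform modulus of continuity over $g\in U$ for the term $\rho^{M}(g\circ\psi,g\circ\phi)$, and the same use of $M[U]$ and local measure preservation for the term $\rho^{M}(\phi\circ g,\psi\circ g)$, yielding the identical $f_U(\k,\eps)=\k+\eps+\sqrt{\eps}+\sup_{g\in U}\textrm{m.o.c.}[g:\sqrt{\eps}]$. The only difference is that you cite Lemma \ref{L:mp} explicitly where the paper leaves it implicit.
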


\begin{proof}
Suppose $M,\phi,\psi, U, \k, \eps$ are as in the statement. Let $N=\{p\in M:~ \rho(\psi(p), \phi(p))<\sqrt{\epsilon}\}$. Then $\vol(N) \ge (1-\sqrt{\epsilon})\vol(M)$ by Markov's inequality. So for any $g \in G$,
\begin{eqnarray*}
\rho^{M}(g \circ \psi, g\circ \phi) &\le &\bvol(M\setminus N) + \int_N \rho(g\psi(p),g\phi(p))~\dee\bvol(p) \\
&\le &\sqrt{\epsilon} + \textrm{m.o.c.}[g:\sqrt{\epsilon}]
\end{eqnarray*}
where the modulus of continuity is defined by 
$$\textrm{m.o.c.}[g:\sqrt{\epsilon}]=\sup_{x,y \in X}\big\{ \rho(gx,gy):~\rho(x,y) \le \sqrt{\epsilon}\big\}.$$

Fix $g \in U$. Then $g.p$ is well-defined for $p\in M[U]$. Thus.
\begin{eqnarray*}
\rho^{M}( \psi \circ g, \phi \circ g) &\le &\bvol(M\setminus M[U])+\int_{M[U]} \rho(\psi(g. p),\phi(g. p))~\dee\bvol(p) \\
&\le &\kappa+\rho^{M}(\psi,\phi) \le \kappa+\eps.
\end{eqnarray*}

So
\begin{eqnarray*}
\rho^{M}(g \circ \psi, \psi \circ g) &\le& \rho^{M}(g  \circ\psi, g \circ\phi) + \rho^{M}(g \circ\phi, \phi  \circ g) + \rho^{M}(\phi  \circ g, \psi \circ g)\\
&\le & \sqrt{\epsilon} + \textrm{m.o.c.}[g:\sqrt{\epsilon}] + \delta + \kappa+ \epsilon \le f_U(\k,\eps) +\d
\end{eqnarray*}
where $f_U(\k,\eps) = \sqrt{\epsilon} + \eps+\kappa+\sup_{g\in U} \textrm{m.o.c.}[g:\sqrt{\epsilon}]$. This satisfies $\lim_{\k,\eps \searrow 0} f_U(\kappa,\epsilon) =0$ because $U$ is pre-compact and the action is uniformly continuous.
\end{proof}

\begin{proof}[Proof of Proposition \ref{prop:abs-cont}]
Clearly
$$\Map_{\textrm{mp}}\left(M_i,X,\rho:U,\delta\right) \subset \Map\left(M_i,X,\rho:U,\delta,\cO\right)$$
(for any pre-compact $U \subset G$, open neighborhood $\cO \ni \mu$ and $\delta>0$) which implies $h^{\textrm{mp}}_\Si\left(G,X,\mu, \rho_{X}\right) \le h_\Si\left(G,X,\mu, \rho_{X}\right)$. 

To prove the other inequality, let $\epsilon,\eta>0$ be constants. By Lemma \ref{L:perturb0}, there exists a neighborhood $\cO \subset \Prob(X)$ of $\mu$ such that if $\phi \in \Map\left(M_i,X,\rho:U,\delta,\cO\right)$ then there exists $\phi':M_i \to X$ with $\phi'_*\bvol = \mu$ and $\rho^{M_i}(\phi,\phi')<\eta$. Lemma \ref{L:perturb1} implies $\rho^{M_i}(g\circ \phi', \phi' \circ g) \le \d + f_U(i,\eta)$ for some function $f_U$ which tends to zero as $i\to\infty$ and $\eta \searrow 0$ while $U$ is held fixed. Thus 
$$\phi' \in \Map_{\textrm{mp}}\left(M_i,X,\rho:U,\delta+f_U(i,\eta)\right).$$

Now suppose $Y \subset \Map\left(M_i,X,\rho:U,\delta,\cO\right)$ is $(\rho^{M_i},\eps)$-separated. Let $Y' = \{\phi':~ \phi \in Y\}$. By the previous paragraph $Y' \subset \Map_{\textrm{mp}}\left(M_i,X,\rho:U,\delta+f_U(i,\eta)\right)$ is $\eps-2\eta$ separated. Thus
$$\sep_{\epsilon}\left(\Map\left(M_i,X,\rho:U,\delta,\cO\right)\right) \le \sep_{\epsilon-2\eta}\left(\Map_{\textrm{mp}}\left(M_i,X,\rho_{X}:U,\delta + f_U(i,\eta) \right)\right).$$
Now take the logarithm of both sides, divide by $\vol(M_i)$, take the limsup as $i\to\infty$ then the infimum over $\cO,\d$, then the infimum over $\eta$, then the infimum over $U$ and finally the supremum over $\eps$ to obtain $h^{\textrm{mp}}_\Si\left(G,X,\mu, \rho\right) \ge h_\Si\left(G,X,\mu, \rho\right)$.


\end{proof}

The next lemma shows how to avoid the non-atomic assumption from Proposition \ref{prop:abs-cont}.

\begin{lem}\label{lem:nonatomic}
Suppose $G$ is non-discrete and $M$ is a local $G$-space. Then $\vol_M$ has no atoms.
\end{lem}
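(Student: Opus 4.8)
The plan is to reduce the assertion to showing that every singleton in $M$ is $\vol_M$-null, and then to evaluate $\vol_M(\{p\})$ directly from the formula defining the canonical measure. Since $M$ is lcsc it is $\sigma$-compact and metrizable, and $\vol_M$ is Radon, hence $\sigma$-finite; it is then standard that such a measure is non-atomic precisely when $\vol_M(\{p\})=0$ for every $p\in M$ (an atom of finite positive mass, intersected with a countable neighbourhood base at a suitable point of the atom, collapses to a singleton of positive mass). So it suffices to prove $\vol_M(\{p\})=0$ for all $p\in M$.

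Fix $p\in M$ and, using Axiom 4 of Definition \ref{D:partial}, choose a chart $f_p:\dom(f_p)\to\rng(f_p)$ centered at $p$. Since $\rng(f_p)$ is a neighbourhood of $1_G$ and $1_G.p=p$ by Axiom 1, the defining relation $g=f_p(g.p)$ gives $f_p(p)=1_G$; and because $f_p$ is a homeomorphism (so $g\mapsto g.p$ is injective on $\rng(f_p)$) we get $\{g\in\rng(f_p):~g.p=p\}=\{1_G\}$. As $\{p\}\subset\dom(f_p)$ is Borel, equation (\ref{E:vol}) of Proposition \ref{P:canonicalmeasure} applies with $K=\{p\}$ and yields
$$\vol_M(\{p\}) = \Haar(\{g\in\rng(f_p):~g.p\in\{p\}\}) = \Haar(\{1_G\}).$$

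It remains to recall that a locally compact group $G$ satisfies $\Haar(\{1_G\})>0$ if and only if $G$ is discrete: translation invariance of $\Haar$ forces all singletons to have the same mass, so if this common mass were positive then every pre-compact neighbourhood of $1_G$ would be finite, hence $\{1_G\}$ would be open. Since $G$ is non-discrete, $\Haar(\{1_G\})=0$, whence $\vol_M(\{p\})=0$; as $p$ was arbitrary, $\vol_M$ has no atoms. I do not expect a genuine obstacle here: the computation of $\vol_M(\{p\})$ is immediate from the construction of the canonical measure, and the only step needing any care is the measure-theoretic reduction in the first paragraph, which is routine given that $M$ is lcsc.
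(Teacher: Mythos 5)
Your argument is correct and is essentially the paper's own proof: both compute $\vol_M(\{p\})=\Haar(f_p(\{p\}))=\Haar(\{1_G\})$ via a chart and Proposition \ref{P:canonicalmeasure}, then use non-discreteness of $G$ to conclude this is zero. You simply spell out two routine points the paper leaves implicit (the reduction of ``no atoms'' to ``all singletons are null'' and the fact that $\Haar(\{1_G\})=0$ for non-discrete $G$).
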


\begin{proof}
Let $p \in M$ and let $f_p$ be a chart centered at $p$. By Proposition \ref{P:canonicalmeasure}, $\vol_M(K) = \Haar(f_p(K))$ for every Borel $K$ in the domain of $f_p$. In particular, $\vol_M(\{p\}) = \Haar(\{1_G\})$. Because $G$ is non-discrete, $\Haar(\{1_G\})=0$. Since $p$ is arbitrary, this implies $\vol_M$ has no atoms.
\end{proof}

\subsection{Proof of Theorem \ref{thm:measure}}


\begin{proof}[Proof of Theorem \ref{thm:measure}]
This Theorem is known in case $G$ is discrete (for example, see \cite{MR3616077}). So we assume $G$ is non-discrete. By Lemma \ref{lem:nonatomic} each $(M_i,\vol)$ is nonatomic. 

Suppose $(G,X,\mu)$ and $(G,Y,\nu)$ are measurably conjugate, uniformly continuous, pmp actions and $\rho_X,\rho_Y$ are 1-bounded, uniformly continuous, generating pseudo-metrics on $X$, $Y$ respectively. It suffices to prove $h_\Si(G,X,\mu,\rho_X)=h_\Si(G,Y,\nu,\rho_Y)$. By Lemma \ref{lem:metric2}, we may assume without loss of generality that $\rho_X$ and $\rho_Y$ are metrics rather than pseudo-metrics. By Proposition \ref{prop:abs-cont} we may consider measure-preserving microstates.

Let $\Phi:X \to Y$ be a measure-conjugacy. Let $\tau>0$ be a constant. Because $X$ and $Y$ are Polish spaces, $\mu$ and $\nu$ are Radon measures and so are inner regular. By Lusin's Theorem there exists a compact subset $L \subset X$ such that (a) $\mu(L)> 1-\tau$ and (b)  $\Phi$ restricted to $L$ is uniformly continuous.

\noindent {\bf Claim 1}. If $\phi \in \Map_{\textrm{mp}}(M, X, \rho_X: U, \delta)$ and $M$ is a $(U,\kappa)$-sofic approximation to $G$ then $\Phi \circ \phi \in \Map_{\textrm{mp}}(M, Y, \rho_Y:U, f_1(\kappa, \delta,\tau ))$ for some function $f_1$ satisfying
$$\lim_{\tau \searrow 0} \lim_{\d,\k \searrow 0} f_1(\k,\d,\tau) = 0.$$

\begin{proof}[Proof of Claim 1]
Since $\Phi$ is a measure-conjugacy and $\phi_*\bvol=\mu$, it is immediate that $(\Phi\circ \phi)_*\bvol=\nu$. So it suffices to show that $\rho^M_Y(\Phi\circ \phi \circ g,  g\circ \Phi \circ \phi ) <f_1(\k,\d,\tau)$ for all $g \in U$ (where $f_1$ is yet to be defined).  

Fix $g \in U$. Let 
\begin{eqnarray*}
N_1 &=& \{p\in M:~ \phi(g. p) \in L\},\\
N_2 &=& \{p \in M:~ g \phi(p) \in L\},\\
N_3 &=& \left\{p \in M:~ \rho_X(\phi(g.p) , g\phi(p)) < \sqrt{\delta}\right\}.
\end{eqnarray*}
Then
\begin{eqnarray*}
\bvol(N_1) &\ge& 1-\kappa-\tau\\
\bvol(N_2) &\ge& 1 - \tau,\\
\bvol(N_3) &\ge& 1-\sqrt{\delta}.
\end{eqnarray*}
The first inequality holds because $M$ is $(U,\k)$-sofic and $\mu(L)>1-\tau$ (this uses Lemma \ref{L:mp}). The second holds because $\phi_*\bvol = \mu$ so $\bvol(N_2) = \mu(g^{-1}L) = \mu(L)$. The third inequality holds by applying Markov's inequality to $\rho^{M}_{X}(\phi\circ g, g\circ \phi)<\d$. Therefore,
$$\bvol(N_1\cap N_2 \cap N_3) \ge 1 - \kappa - 2\tau - \sqrt{\delta}.$$
Because $\Phi$ is $G$-equivariant,
\begin{eqnarray*}
\rho^M_Y(\Phi\circ \phi \circ g,  g\circ \Phi \circ \phi ) &=& \rho^M_Y(\Phi\circ \phi \circ g,  \Phi \circ g \circ \phi ) \\
&\le & \kappa + 2\tau + \sqrt{\delta}  + \int_{N_1\cap N_2 \cap N_3} \rho_Y(\Phi (\phi (g.p)),  \Phi (g (\phi (p))))~\dee\bvol(p)  \\
&\le & \kappa + 2\tau+ \sqrt{\delta}  + \textrm{m.o.c.}[\Phi \resto L: \sqrt{\delta}]
\end{eqnarray*}
where the modulus of continuity is defined by
$$\textrm{m.o.c.}[\Phi \resto L: \sqrt{\delta}]=\sup_{x,y \in L}\{ \rho_Y(\Phi(x),\Phi(y)):~\rho_X(x,y) \le \sqrt{\delta}\}.$$
Set $f_1(\kappa,\delta,\tau) = \kappa + 2\tau+ \sqrt{\delta}  + \textrm{m.o.c.}[\Phi \resto L: \sqrt{\delta}]$ to finish the proof.

\end{proof}

We can improve upon Claim 1 as follows. Let $f'_1(\kappa,\delta)$ be the infimum of $f_1(\kappa,\delta,\tau)$ over all $\tau$. Claim 1 implies that if $f''_1(\kappa,\delta)$ is any constant strictly larger than $f'_1(\kappa,\delta)$ then 
 $$\Phi \circ \phi \in \Map_{\textrm{mp}}(M, Y, \rho^M_Y:U, f''_1(\kappa, \delta )).$$
We can choose $f''_1(\kappa,\delta)$ so that
\begin{eqnarray}\label{E:mse-limit}
\lim_{\k, \d \searrow 0} f''_1(\kappa,\delta) = 0. 
\end{eqnarray}


\noindent {\bf Claim 2}. Let $(M,\vol)$ be a finite measure space. For $i=1,2$, let $\phi_i:M\to X$ be measure-preserving (so $\phi_{i*}\bvol=\mu$). Suppose $\rho^M_Y(\Phi\phi_1,\Phi\phi_2)<\eps$. Then $\rho^M_X( \phi_1, \phi_2) < f_2(\epsilon,\tau)$
for some function $f_2$ satisfying
$$\lim_{\eps \searrow 0}  f_2(\eps,\tau) =2\tau.$$
\begin{proof}

Let $N_1 = \phi_1^{-1}(L)$ and $N_2 = \phi_2^{-1}(L)$ and
$$N_3 = \left\{p \in M:~ \rho_Y(\Phi\phi_1(p) , \Phi\phi_2(p)) < \sqrt{\epsilon} \right\}$$
Because $\phi_{1*}\bvol=\mu$, $\bvol(N_1)=\mu(L) \ge 1-\tau$. Similarly, $\bvol(N_2) \ge 1 - \tau$. Apply Markov's inequality to $\rho^M_Y(\Phi\phi_1,\Phi\phi_2)<\eps$ to obtain $\bvol(N_3) \ge 1-\sqrt{\epsilon}$. Thus
$$\bvol(N_1\cap N_2 \cap N_3) \ge 1 - 2\tau - \sqrt{\epsilon}.$$
Since $\rho_X$ is 1-bounded,
\begin{eqnarray*}
\rho^M_X( \phi_1,  \phi_2) &<& 2\tau + \sqrt{\epsilon} +  \int_{N_1\cap N_2 \cap N_3} \rho_X(\phi_1(p), \phi_2(p)) ~\dee\bvol(p)\\
&\le&  2\tau + \sqrt{\epsilon}  + \textrm{m.o.c.}[\Phi^{-1}\resto \Phi(L): \sqrt{\epsilon}]
\end{eqnarray*}
where the modulus of continuity is defined by
$$\textrm{m.o.c.}[\Phi^{-1}\resto \Phi(L): \sqrt{\epsilon}]=\sup_{x,y \in \Phi(L)}\{ \rho_X(\Phi^{-1}(x),\Phi^{-1}(y)):~\rho_Y(x,y) \le \sqrt{\eps}\}.$$
Set $f_2(\epsilon,\tau)= 2\tau + \sqrt{\epsilon}  + \textrm{m.o.c.}[\Phi^{-1}\resto \Phi(L): \sqrt{\epsilon}]$ to finish the claim.
\end{proof}

Let $\Omega$ be a covering of $\Map_{\textrm{mp}}\left(M_i, Y,\rho_Y:U, f''_1(\kappa_i,\delta) \right)$ by sets of $\rho_Y^{M_i}$-diameter $<\eps$. Then $\Phi^{-1}_*\Omega=\{\Phi^{-1}(\cY):~\cY \in \Omega\}$ is a covering of $\Map_{\textrm{mp}}\left(M_i, X,\rho_X:U,\delta\right)$ by sets of $\rho_X^{M_i}$-diameter $<f_2(\eps,\tau)$. Thus
\begin{eqnarray*}
\cov_{f_2(\epsilon, \tau)}\left(\Map_{\textrm{mp}}\left(M_i, X,\rho_X:U,\delta\right)\right) 
&\le& \cov_{\epsilon}\left(\Map_{\textrm{mp}}\left(M_i, Y,\rho_Y:U, f''_1(\kappa_i,\delta) \right)\right).
\end{eqnarray*}

We take the logarithm of both sides, divide by $\vol(M_i)$, take the limsup as $i\to\infty$, take the infimum over $U,\delta$  to obtain
\begin{eqnarray*}
&& \inf_{U\subset G} \inf_{\d>0} \limsup_{i\to \infty}\frac{1}{\ovol\left(M_i\right)} \cov_{f_2(\epsilon, \tau)}\left(\Map_{\textrm{mp}}\left(M_i, X,\rho_X:U,\delta\right)\right)\\
&& \le \inf_{U\subset G} \inf_{\d>0} \limsup_{i\to \infty}\frac{1}{\ovol\left(M_i\right)} \cov_{\epsilon}\left(\Map_{\textrm{mp}}\left(M_i, Y,\rho_Y:U, \d \right)\right).
\end{eqnarray*}
This uses (\ref{E:mse-limit}). Now take the limit as $\eps \searrow 0$ and then $\tau \searrow 0$ to obtain
$$h^{\textrm{mp}}_\Si(G,X,\mu, \rho_X) \le h^{\textrm{mp}}_\Si(G,Y,\nu,\rho_Y).$$
Since the opposite inequality follows in the same way, we have $h^{\textrm{mp}}_\Si(G,X,\mu,\rho_X)=h^{\textrm{mp}}_\Si(G,Y,\nu,\rho_Y)$. The theorem now follows from Proposition \ref{prop:abs-cont}.

\end{proof}

\section{Variational Principle}\label{S:vp}

The purpose of this section is to prove that topological sofic entropy is the sup over measure sofic entropies. To be precise, given an action $G \cc X$, let $\Prob_G(X) \subset \Prob(X)$ denote the space of $G$-invariant Borel probability measures. 

\begin{thm}\label{thm:variational}
Let $(X,\rho)$ be a compact metric space and $G\cc X$ a jointly continuous action. Then for any sofic approximation $\Si$ to $G$,
$$h_\Si(G, X) = \sup_{\mu \in \Prob_G(X)} h_\Si(G,X,\mu).$$
In particular, if there are no $G$-invariant probability measures on $X$ then $h_\Si(G, X) = -\infty$.  
\end{thm}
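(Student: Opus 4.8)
The plan is to prove the two inequalities separately. Throughout I may assume $\rho$ is $1$-bounded (replacing it by $\min(\rho,1)$ alters neither the topology of $X$ nor, by Lemma~\ref{lem:metric} and Lemma~\ref{lem:metric2}, the quantities $h_\Si(G,X)$ and $h_\Si(G,X,\mu)$), and I note that the action, being jointly continuous on the compact $X$, is uniform on $\bar U\times X$ for each pre-compact $U$, so the machinery of \S\ref{S:tse}--\S\ref{S:mse} applies; I also use that $\vol(M_i)\to\infty$ and that $\Prob(X)$, hence $\Prob(\Prob(X))$, is compact metrizable. The inequality $h_\Si(G,X)\ge\sup_{\mu\in\Prob_G(X)}h_\Si(G,X,\mu)$ is immediate: for every pre-compact open $U\ni 1_G$, every $\delta>0$ and every weak* open $\cO\ni\mu$ one has $\Map(M_i,X,\rho:U,\delta,\cO)\subseteq\Map(M_i,X,\rho:U,\delta)$, so $\sep_\eps$ of the former is $\le\sep_\eps$ of the latter; passing through the infima and limits yields $h_\Si(G,X,\mu)\le h_\Si(G,X)$ for each invariant $\mu$, whence the supremum is $\le h_\Si(G,X)$.

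For the reverse inequality it suffices, since $h_\Si(G,X)=\sup_{\eps>0}h^\eps_\Si(G,X)$, to fix $\eps>0$ and produce $\mu_\eps\in\Prob_G(X)$ with $h_\Si(G,X,\mu_\eps)\ge h^\eps_\Si(G,X)=:b$; we may assume $b>-\infty$. Writing the given sofic approximation as $M_j$ being $(U_j,\kappa_j)$-sofic with $U_j\nearrow G$ pre-compact open and $\kappa_j\searrow 0$, and choosing $\delta_n\searrow 0$, we note that $\Map(M_i,X,\rho:U_j,\delta_n)$ shrinks as $j$ or $n$ grows, so $\limsup_i \vol(M_i)^{-1}\log\sep_\eps(\Map(M_i,X,\rho:U_n,\delta_n))\ge b$ for each $n$; a diagonal choice then gives $i_1<i_2<\cdots$ with $\vol(M_{i_n})^{-1}\log\sep_\eps(\Map(M_{i_n},X,\rho:U_{i_n},\delta_n))\ge b-1/n$. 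Let $\cN_n$ be a maximal $(\rho^{M_{i_n}},\eps)$-separated subset of $\Map(M_{i_n},X,\rho:U_{i_n},\delta_n)$, so $\vol(M_{i_n})^{-1}\log|\cN_n|\ge b-1/n$ and $\cN_n\ne\emptyset$. Put $E(\phi):=\phi_*\bvol_{M_{i_n}}\in\Prob(X)$ for $\phi\in\cN_n$ and form the empirical distributions $\lambda_n:=|\cN_n|^{-1}\sum_{\phi\in\cN_n}\delta_{E(\phi)}\in\Prob(\Prob(X))$; after passing to a subsequence, $\lambda_n\to\lambda$ weak*.

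Next I would show that $\lambda$ is supported on $\Prob_G(X)$. For $g\in U_{i_n}$ and $f\in C(X)$ with $|f|\le 1$ one has $E(\phi)(f\circ g)=\int f(g.\phi(p))\,d\bvol(p)$ and $E(\phi)(f)=\int f(\phi(p))\,d\bvol(p)$; since $p\mapsto g.p$ is injective and $\bvol$-preserving on $M_{i_n}[U_{i_n}]$ by Lemma~\ref{L:mp} ($G$ is unimodular) and $M_{i_n}$ is a $(U_{i_n},\kappa_{i_n})$-sofic approximation with $\kappa_{i_n}\to 0$, and since $\rho^{M_{i_n}}(g\circ\phi,\phi\circ g)<\delta_n$ while $f$ is uniformly continuous with respect to $\rho$ on the compact space $X$, one bounds $|E(\phi)(f\circ g)-E(\phi)(f)|$ by a modulus of continuity of $f$ (applied via Markov's inequality to the set where $\rho(g.\phi(p),\phi(g.p))$ is not small) plus a multiple of $\kappa_{i_n}$, obtaining $\sup_{\phi\in\cN_n}|E(\phi)(f\circ g)-E(\phi)(f)|\to 0$ as $n\to\infty$ for each fixed $g$ and $f$. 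A compactness argument over finitely many pairs $(g,f)$ then shows that every weak* neighbourhood $\cV$ of the closed set $\Prob_G(X)$ satisfies $\lambda_n(\cV)=1$ eventually, hence $\lambda(\Prob_G(X))=1$; in particular $\Prob_G(X)\ne\emptyset$. If instead $\Prob_G(X)=\emptyset$, this last conclusion is impossible, which forces $b=-\infty$ for every $\eps$, i.e.\ $h_\Si(G,X)=-\infty$; this establishes the ``in particular'' clause.

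Finally I would extract $\mu_\eps$. For each $r\in\N$ choose a weak* open ball $\cO^{(r)}$ of radius $<1/r$ about some point $\mu_r$ of $\supp\lambda\subseteq\Prob_G(X)$ with $\lambda(\cO^{(r)})>0$; after a further subsequence $\mu_r\to\mu_\eps\in\Prob_G(X)$. Given a weak* neighbourhood $\cO\ni\mu_\eps$, a pre-compact open $U\ni 1_G$ and $\delta>0$, fix $r$ large enough that $\cO^{(r)}\subseteq\cO$; for all large $n$ we have $U_{i_n}\supseteq U$, $\delta_n\le\delta$, and, by the portmanteau theorem, $\lambda_n(\cO^{(r)})\ge\lambda(\cO^{(r)})/2$, so the set $\{\phi\in\cN_n:E(\phi)\in\cO^{(r)}\}$ has at least $(\lambda(\cO^{(r)})/2)|\cN_n|$ elements, is $(\rho^{M_{i_n}},\eps)$-separated, and lies in $\Map(M_{i_n},X,\rho:U,\delta,\cO)$. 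Hence $\vol(M_{i_n})^{-1}\log\sep_\eps(\Map(M_{i_n},X,\rho:U,\delta,\cO))\ge \vol(M_{i_n})^{-1}\log(\lambda(\cO^{(r)})/2)+b-1/n\to b$ using $\vol(M_{i_n})\to\infty$; therefore $\limsup_i\vol(M_i)^{-1}\log\sep_\eps(\Map(M_i,X,\rho:U,\delta,\cO))\ge b$, and taking the infima over $\cO$, $U$, $\delta$ gives $h^\eps_\Si(G,X,\mu_\eps)\ge b$. Then $h_\Si(G,X,\mu_\eps)\ge h^\eps_\Si(G,X,\mu_\eps)\ge b=h^\eps_\Si(G,X)$, so $\sup_\mu h_\Si(G,X,\mu)\ge h^\eps_\Si(G,X)$ for every $\eps$, and letting $\eps\searrow 0$ completes the proof. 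The main obstacle is exactly this last step: converting the weak* limit $\lambda$ of empirical-measure distributions into a single invariant measure of entropy at least $b$, which forces one to juggle the diagonalization over $(U_{i_n},\delta_n)$, the shrinking balls $\cO^{(r)}$, and — crucially — the use of $\vol(M_{i_n})\to\infty$ to absorb the $\log\lambda(\cO^{(r)})$ terms. The almost-invariance estimate of the third paragraph is routine given Lemma~\ref{L:mp}, and the remaining work is bookkeeping through the nested infima defining the two entropies.
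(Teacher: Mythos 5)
Your proof is correct and takes essentially the same route as the paper's: the paper proves the slightly more general Theorem \ref{thm:variational2} by forming empirical distributions $Q(i,U,\d,\cO)\in\Prob(\Prob(X))$ of maximal $(\rho^{M_i},\eps)$-separated sets, extracting a weak* accumulation point $Q_*$, showing $Q_*$ is carried by $\Prob_G(X)$ via the separately stated equidistribution Lemma \ref{lem:equidistribution}, and applying the portmanteau theorem to a point of $\supp Q_*$ --- exactly your $\lambda_n$, $\lambda$, inline almost-invariance estimate, and $\mu_\eps$, the only structural difference being that you diagonalize over $(U_n,\delta_n)$ up front while the paper takes the accumulation point over the net of parameters $(U,\d,\cO)$ at the end. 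One small slip: from $\limsup_i\vol(M_i)^{-1}\log\sep_\eps(\Map(M_i,X,\rho:U_n,\delta_n))\ge b$ for each \emph{fixed} $n$, the diagonal choice yields the lower bound for $\sep_\eps(\Map(M_{i_n},X,\rho:U_n,\delta_n))$, not for the smaller set $\Map(M_{i_n},X,\rho:U_{i_n},\delta_n)$ (no limsup bound is available along the diagonal in $U$); take $\cN_n$ inside $\Map(M_{i_n},X,\rho:U_n,\delta_n)$ instead, which costs nothing since $U_n\nearrow G$ and $\delta_n\searrow 0$ still give the almost-invariance and the inclusion into $\Map(M_{i_n},X,\rho:U,\delta,\cO)$ that you need downstream.
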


For future applications, it is nice to have a slightly more general version. If $A \subset \Prob(X)$ is any set, then define the entropy 
$$h_\Si(G,X,A) = \lim_{\eps \searrow 0} \inf_{\cO \supset A} \inf_{U \subset G} \inf_{\d > 0}\limsup_{n\to\infty} \vol(M_n)^{-1} \log \cov_\eps(\Map(M,X,\rho:U,\d,\cO))$$
where $\cO$ varies over all open sets containing $A$ and $U$ varies over all pre-compact open subsets $U$ of $G$. 

This definition is the same as the definition of measure entropy, with the exception that the open set $\cO$ is required to contain $A$ instead of $\mu$. In particular, if $A = \{\mu\}$ then $h_\Si(G,X,A) = h_\Si(G,X,\mu)$.  Therefore, the next theorem immediately implies Theorem \ref{thm:variational}.

\begin{thm}\label{thm:variational2}
Let $(X,\rho)$ be a compact metric space and $G\cc X$ a jointly continuous action. Then for any sofic approximation $\Si$ to $G$ and closed subset $A \subset \Prob(X)$
$$h_\Si(G,X,A) = \sup_{\mu\in A \cap \Prob_G(X)} h_\Si(G,X,\mu).$$
\end{thm}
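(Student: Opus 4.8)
The plan is to prove the two inequalities separately. The easy direction is $h_\Si(G,X,A) \ge \sup_{\mu \in A \cap \Prob_G(X)} h_\Si(G,X,\mu)$: for each $G$-invariant $\mu \in A$ and each open neighborhood $\cO$ of $\mu$, we have $\cO \cap A \ne \emptyset$ but more importantly any open $\cO \supset A$ is in particular an open neighborhood of $\mu$, so $\Map(M_i,X,\rho:U,\d,\cO) \supset \Map(M_i,X,\rho:U,\d,\cO')$ whenever $\cO' \ni \mu$ is small enough to lie inside $\cO$; unwinding the infima shows $h_\Si(G,X,A) \ge h_\Si(G,X,\mu)$ for each such $\mu$, hence the sup bound. (If $A \cap \Prob_G(X) = \emptyset$ the sup is $-\infty$ and there is nothing to prove.)

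The substantive direction is $h_\Si(G,X,A) \le \sup_{\mu \in A \cap \Prob_G(X)} h_\Si(G,X,\mu)$. The standard Kerr--Li strategy adapts: fix $\eps > 0$; for each open $\cO \supset A$, precompact open $U \ni 1_G$, and $\d > 0$, pick for each large $i$ a maximal $(\rho^{M_i},\eps)$-separated subset $E_i \subset \Map(M_i,X,\rho:U,\d,\cO)$. Form the empirical-measure averages: each $\phi \in E_i$ has an empirical measure $\phi_*\bvol_i \in \cO$, and one forms the averaged measure $\nu_i = |E_i|^{-1}\sum_{\phi \in E_i} \phi_*\bvol_i$ (or rather one must be more careful and ``symmetrize'' using the local $G$-action — see below). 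By compactness of $\Prob(X)$, pass to a subsequential weak* limit $\nu$ of the $\nu_i$. One then argues that (a) $\nu$ is $G$-invariant, using that the maps are $(\rho,U,\d)$-equivariant on an $\eps$-large-volume portion of $M_i$ together with Lemma \ref{L:mp} (local measure-invariance), so translating and integrating shows $\int f \circ g\, \dee\nu = \int f\, \dee\nu$ up to an error controlled by $\d$ and the sofic defect $\eps_i \to 0$; (b) $\nu \in A$, since $\cO \supset A$ is arbitrary and $\nu_i \in \overline{\cO}$, so $\nu$ lies in every open neighborhood of $A$, hence in $\bar A = A$; and (c) the exponential growth rate of $|E_i|$ is at most $\vol(M_i) \cdot (h_\Si(G,X,\nu) + o(1))$, which one gets by showing that most of the mass of $E_i$ consists of maps whose own empirical measure is weak*-close to $\nu$, so they live in $\Map(M_i,X,\rho:U',\d',\cO')$ for a neighborhood $\cO'$ of $\nu$, and a counting/pigeonhole argument over a finite weak* cover of $\Prob(X)$ isolates the dominant cell. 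Taking $\cO \searrow A$, $U \nearrow G$, $\d \searrow 0$, $\eps \searrow 0$ and using a diagonal argument to extract the limiting invariant measure yields the bound.

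The main obstacle is step (a), establishing $G$-invariance of the limit measure $\nu$ in the \emph{locally compact, partial-action} setting. In the countable case one averages $\phi_*u_i$ over the (honest, global) sofic permutations and invariance is nearly formal; here the partial action on $M_i$ is only defined on a large-volume subset $M_i[U]$, associativity can fail (cf.\ the remark after Definition \ref{D:sofic group}), and $\vol(M_i)$ plays the role of the normalizing cardinality. The fix is to work with the ``equivariance on average'' formulation: for $g$ in a precompact $U$, use $\int_{M_i} \rho_X(\phi(g.p), g\phi(p))\, \dee\bvol_i(p) < \d$ together with $\vol_{M_i}(g.K) = \vol_{M_i}(K)$ on the domain of definition (Lemma \ref{L:mp}) to compute, for a bounded continuous $f$,
\[
\Big| \int f(g y)\, \dee(\phi_*\bvol_i)(y) - \int f(y)\, \dee(\phi_*\bvol_i)(y) \Big|
\le \|f\|_\infty \bvol_i(M_i \setminus M_i[U]) + \mathrm{m.o.c.}[f : \d] + \text{(translation error)},
\]
and the right side tends to $0$ as $i \to \infty$ then $\d \searrow 0$. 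Passing to the limit over a countable determining family of test functions $f$ shows $\nu$ is $G$-invariant. A secondary technical point, needed so that this error control survives averaging over $E_i$, is that one should average the empirical measures themselves (which is legitimate since $\Prob(X)$ is convex and weak*-compact) rather than trying to symmetrize map-by-map; the convexity makes the invariance estimate pass through the average verbatim, and the compactness of $X$ is what keeps all the moduli of continuity uniform. Everything else — the pigeonhole over a finite weak* cover, the monotonicity bookkeeping in $\eps, U, \d, \cO$, and the diagonal extraction — is routine given the machinery already set up in \S\ref{S:tse} and \S\ref{S:mse}.
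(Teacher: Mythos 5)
Your overall architecture (maximal $\eps$-separated sets, empirical measures, weak* compactness, almost-invariance of empirical measures of good models) matches the paper's, and your sketch of the invariance step (a) is essentially the paper's Lemma \ref{lem:equidistribution}. But there is a genuine gap in how you extract the limiting measure: you average the empirical measures into a single barycenter $\nu_i = |E_i|^{-1}\sum_{\phi\in E_i}\phi_*\bvol_i \in \Prob(X)$ and pass to a limit $\nu$. This breaks both of your steps (b) and (c). For (b): the neighborhoods $\cO\supset A$ need not be convex, so from $\phi_*\bvol_i\in\cO$ for each $\phi$ you cannot conclude $\nu_i\in\overline{\cO}$, and the limit $\nu$ need not lie in $A$ (take $A=\{\mu_1,\mu_2\}$ with the separated sets splitting evenly between neighborhoods of $\mu_1$ and $\mu_2$; then $\nu=(\mu_1+\mu_2)/2\notin A$). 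For (c): it is simply not true that ``most of the mass of $E_i$ consists of maps whose own empirical measure is weak*-close to $\nu$''--- in the same example no map has empirical measure near the barycenter, so $\Map(M_i,X,\rho:U,\d,\cO')$ can be empty for small neighborhoods $\cO'$ of $\nu$ and the bound $|E_i|\lesssim e^{\vol(M_i)(h_\Si(G,X,\nu)+o(1))}$ fails. Your parenthetical appeal to a pigeonhole over a finite weak* cover is the right instinct, but it produces a dominant-cell measure, not $\nu$, and you do not carry out the diagonal bookkeeping needed to turn a cover-scale-dependent dominant cell into a single invariant measure in $A$.

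The paper avoids all of this by not averaging: it forms the empirical \emph{distribution} of empirical measures, $Q(i,U,\d,\cO)=|S|^{-1}\sum_{\phi\in S}\d_{\phi_*\bvol}\in\Prob(\Prob(X))$, takes a weak* limit $Q_*$ in $\Prob(\Prob(X))$ (compact since $X$ is compact), and shows $Q_*$ is supported on $A\cap\Prob_G(X)$ using Lemma \ref{lem:equidistribution} and the fact that each $\phi_*\bvol\in\cO$. Then \emph{any} $\mu$ in the support of $Q_*$ works: for every neighborhood $\cO_\mu$ of $\mu$ one has $Q_*(\cO_\mu)>0$, so by the portmanteau theorem a definite proportion of the separated set $S(n_i,U,\d,\cO)$ has empirical measure in $\cO_\mu$, giving $\sep_\eps(\Map(M_{n_i},X,\rho:U,\d,\cO_\mu))\ge |S(n_i,U,\d,\cO)|\,Q_*(\cO_\mu)/4$ and hence $h^\eps_\Si(G,X,\mu)\ge h^\eps_\Si(G,X,A)$. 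Replacing your barycenter with this measure-on-measures (or, equivalently, executing the pigeonhole argument with a genuine diagonal extraction) is the missing idea; the rest of your outline would then go through.
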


The first step in the proof is showing that if $\phi:M \to X$ is a good enough topological model for the action $G \cc X$ then $\phi_*\bvol$ is almost $G$-invariant.

\begin{lem}\label{lem:equidistribution}
Let $G \cc X$ be as above. Let $\cO$ be an open neighborhood of $\Prob_G(X)$ in $\Prob(X)$. Then there exist a pre-compact open set $U \subset G$ and $\delta>0$ such that if $M$ is a $(U,\delta)$-sofic approximation to $G$ and $\phi \in \Map(M,X,\rho:U,\delta)$ then $\phi_*\bvol \in \cO$.

\end{lem}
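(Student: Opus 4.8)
The plan is to argue by contradiction, exploiting compactness of $\Prob(X)$. Suppose the conclusion fails. Fix an increasing exhaustion $U_1 \subset U_2 \subset \cdots$ of $G$ by pre-compact open neighborhoods of the identity, and take $\delta_k = 1/k$. Then for each $k$ there is a $(U_k,\delta_k)$-sofic approximation $M_k$ and a map $\phi_k \in \Map(M_k, X, \rho : U_k, \delta_k)$ such that the empirical measure $\nu_k := (\phi_k)_*\bvol_{M_k}$ lies outside $\cO$. Since $X$ is compact, $\Prob(X)$ is compact in the weak* topology, so after passing to a subsequence we may assume $\nu_k \to \nu$ for some $\nu \in \Prob(X)$. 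Because $\Prob(X) \setminus \cO$ is closed, $\nu \notin \cO$; in particular $\nu \notin \Prob_G(X)$, so there is some $g_0 \in G$ and a continuous function $F : X \to \R$ with $\int F \circ g_0 \, \dee\nu \ne \int F \, \dee\nu$, say the difference has absolute value $2\eta > 0$.

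The heart of the proof is to show $\nu$ must be $G$-invariant, contradicting the above. Fix $g \in G$ and a bounded continuous (in fact, a bounded Lipschitz, or uniformly continuous) $F : X \to \R$; it suffices to show $\int F \, \dee\nu = \int F \circ g \, \dee\nu$, i.e.\ $\int F(gx)\,\dee\nu(x) = \int F(x)\,\dee\nu(x)$. For $k$ large enough that $g \in U_k$, write
\begin{eqnarray*}
\left| \int F(gx)\,\dee\nu_k(x) - \int F(x)\,\dee\nu_k(x) \right|
&=& \left| \int_{M_k} F(g\,\phi_k(p))\,\dee\bvol(p) - \int_{M_k} F(\phi_k(p))\,\dee\bvol(p) \right|.
\end{eqnarray*}
I would split the first integral by substituting $F(g\,\phi_k(p))$ for $F(\phi_k(g.p))$ up to an error controlled by the $(U_k,\delta_k)$-equivariance of $\phi_k$: since $\rho^{M_k}(g \circ \phi_k, \phi_k \circ g) < \delta_k$, and $F$ is uniformly continuous with respect to $\rho$ (here one uses that $\rho$ is generating and that on the compact space $X$ one may, by Lemma \ref{lem:metric} if needed, take $\rho$ comparable to $d_X$ — or simply note $F$ is uniformly $\rho$-continuous because continuous functions on a compact metric space are uniformly continuous), the difference $\left|\int_{M_k} F(g\,\phi_k(p)) - F(\phi_k(g.p))\,\dee\bvol(p)\right|$ tends to $0$ as $k \to \infty$. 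Next, $\int_{M_k} F(\phi_k(g.p))\,\dee\bvol(p)$ differs from $\int_{M_k} F(\phi_k(p))\,\dee\bvol(p)$ only on the set $M_k \setminus M_k[U_k]$, because on $M_k[U_k]$ the map $p \mapsto g.p$ is a measure-preserving bijection of $M_k[U_k]$ onto its image by Lemma \ref{L:mp}, up to a further set of measure at most $\delta_k$ coming from the deficiency $\bvol(M_k \setminus M_k[U_k]) \le \delta_k$; thus this difference is bounded by $2\|F\|_\infty \delta_k \to 0$. Therefore $\int F \circ g\, \dee\nu_k - \int F\, \dee\nu_k \to 0$, and since $\nu_k \to \nu$ weak* we get $\int F \circ g\, \dee\nu = \int F\, \dee\nu$. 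As $g$ and $F$ were arbitrary, $\nu \in \Prob_G(X)$, contradicting $\nu \notin \cO \supset \Prob_G(X)$.

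The main obstacle I anticipate is handling the ``change of variables'' on $M_k$ carefully: the partial action is only defined on $M_k[U_k]$, and $g.p = gh.p$-type identities need not hold globally, so one must confine the substitution $p \mapsto g.p$ to the good set $M_k[U_k]$ and absorb the complement into an $O(\delta_k)$ error using $\vol_{M_k}(M_k[U_k]) \ge (1-\delta_k)\vol_{M_k}(M_k)$ together with the local measure-preservation of Lemma \ref{L:mp}. A secondary technical point is ensuring $F$ — which a priori is only continuous — can be treated as uniformly continuous with respect to $\rho$; on a compact $X$ with $\rho$ a continuous pseudo-metric this is automatic for $\rho$-uniformly-continuous functions, and it is enough to test weak* convergence against such a class (e.g.\ by Lemma \ref{lem:metric}, one reduces to $\rho$ a metric generating the topology, and then all continuous $F$ are uniformly $\rho$-continuous).
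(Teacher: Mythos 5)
Your argument is correct, and the core estimate is the same one the paper uses: restrict the change of variables $p\mapsto g.p$ to the good set $M[U]$, invoke Lemma \ref{L:mp} for local measure preservation, absorb the complement into an $O(\delta)\|F\|_\infty$ error, and handle the discrepancy between $F(g\phi(p))$ and $F(\phi(g.p))$ via Markov's inequality plus uniform continuity of $F$ (automatic here since $X$ is compact and $\rho$ is a metric in the setting of Theorem \ref{thm:variational2}). Where you differ is in the outer logic: you argue by contradiction, extracting a weak* limit $\nu$ of empirical measures along a sequence $(U_k,\delta_k)$ with $U_k\nearrow G$, $\delta_k\searrow 0$, and showing $\nu$ is $G$-invariant; the paper instead argues directly, first observing that $\Prob_G(X)=\bigcap_{f,g,\eps}\cO_{f,g,\eps}$ with $\cO_{f,g,\eps}=\{\mu:|\mu(f)-\mu(f\circ g)|<\eps\}$ and using compactness of $\Prob(X)\setminus\cO$ to reduce to the case $\cO=\cO_{f,g,\eps}$ for a single triple, after which explicit $U\supset\{1_G,g,g^{-1}\}$ and $\delta$ are produced. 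Both routes lean on compactness of $\Prob(X)$; yours uses sequential compactness and metrizability and is non-constructive (it does not exhibit $U,\delta$), while the paper's covering reduction yields a quantitative choice of $U$ and $\delta$ in terms of finitely many test functions, which is marginally more informative but requires the small extra step of passing from the subbasic sets $\cO_{f,g,\eps}$ to a general open $\cO\supset\Prob_G(X)$. Your secondary worry about uniform $\rho$-continuity of $F$ is moot in this section, since the variational principle is stated for $(X,\rho)$ a compact metric space.
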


\begin{proof}
Without loss of generality, we assume $\rho$ is $1$-bounded. 

If $f:X \to \R$ is continuous, $g\in G$ and $\eps>0$ then let 
$$\cO_{f,g,\eps}=\{\mu \in \Prob(X):~|\mu(f) - \mu(f \circ g)|<\epsilon\}.$$
Observe that $\Prob_G(X)$ is the intersection of $\cO_{f,g,\eps}$ over all such $f,g,\eps$. 

Let $\cO \subset \Prob(X)$ be an open set containing $\Prob_G(X)$. Note $\{\Prob(X) \setminus \overline{\cO_{f,g,\eps}}:~f,g,\eps\}$ is an open cover of $\Prob(X) \setminus \cO$. Because $\Prob(X) \setminus \cO$ is compact, there exist $f_1,\ldots, f_n$, $g_1,\ldots, g_n$ and $\eps_1,\ldots, \eps_n$ such that $\Prob(X)\setminus \cO$ is contained in $\cup_{i=1}^n \Prob(X) \setminus \overline{\cO_{f_i,g_i,\eps_i}}$. Therefore, $\cO$ contains $\cap_{i=1}^n \cO_{f_i,g_i,\eps_i}$.


So it suffices to prove the lemma in the special case in which $\cO=\cO_{f,g,\eps}$ for some $f:X\to \R$, $g\in G$ and $\eps>0$. 

Let $U\subset G$ be a pre-compact open set with $\{1_G, g, g^{-1}\} \subset U$. Choose $\delta>0$ so that $\rho(x,y)<\delta^{1/2}$ implies $|f(x)-f(y)|<\epsilon/2$ and $(4\d + 2\sqrt{\d})\|f\|_\infty  < \epsilon/2$. 

Suppose $M$ is a $(U,\delta)$-sofic approximation to $G$ and $\phi \in \Map(M,X,\rho:U,\delta)$. It suffices to show $\phi_*\bvol \in \cO$. Since $\phi_*\bvol(f)= \int f(\phi(p)) ~\dee\bvol(p)$ and $\phi_*\bvol(f\circ g)= \int f(g\phi(p)) ~\dee\bvol(p)$, it suffices to show that if 
$$\a = \left| \int f(\phi(p)) - f(g\phi(p)) ~\dee\bvol(p) \right|$$
then $\a < \eps.$ For this, let
\begin{eqnarray*}
\b &=& \left| \int_M f(\phi(p)) ~\dee\bvol(p) -\int_{M[g]} f(\phi(g.p)) ~\dee\bvol(p) \right|\\
\g &=& \left|\int_{M[g]}  f(\phi(g.p)) ~\dee\bvol(p) - \int_M f(g\phi(p)) ~\dee\bvol(p) \right|
  \end{eqnarray*}
where $M[g]$ is the set of all $p\in M$ such that $g.p$ is well-defined.   By the triangle inequality $\a \le \b + \g$. 

We estimate $\b$ first. By Lemma \ref{L:mp},
$$ \int_{M[U]} f(\phi(p)) ~\dee\bvol(p)  = \int_{g^{-1}.M[U]} f(\phi(g.p)) ~\dee\bvol(p).$$
Because $M$ is $(U,\d)$-sofic, $\bvol(M[U])\ge (1-\d)$. So 
$$\b  =  \left| \int_{M \setminus M[U]} f(\phi(p)) ~\dee\bvol(p)- \int_{M[g] \setminus g^{-1}.M[U]} f(\phi(g.p)) ~\dee\bvol(p) \right| \le 2\d \|f\|_\infty.$$

To estimate $\g$, let $M'$ be the set of all $p \in M[U]$ such that $\rho(\phi(g.p), g\phi(p)) < \delta^{1/2}$. Note $|f(g\phi(p)) - f(\phi(g.p))| <\eps/2$ for all $p \in M'$ by the choice of $\d$. So
\begin{eqnarray}\label{E1}
  \left| \int_{M'} f(\phi(g.p)) - f( g\phi(p))~\dee\bvol(p) \right| < \eps/2.
  \end{eqnarray}
By Markov's inequality (and because $M' \subset M[U]$),  $\bvol(M') \ge 1-\d-\sqrt{\d}$. So 
$$\g \le \eps/2 +   \left| \int_{M[g]\setminus M'} f(\phi(g.p))~\dee\bvol(p)  - \int_{M\setminus M'}f( g\phi(p))~\dee\bvol(p) \right|  \le \eps/2 + 2(\d + \sqrt{\d})\|f\|_\infty.$$
Thus
$$\a \le \b+\g\le \eps/2 + (4\d + 2\sqrt{\d})\|f\|_\infty < \eps.$$

\end{proof}

\begin{proof}[Proof of Theorem \ref{thm:variational2}]
The inequality 
$$h_\Si(G,X,A) \ge \sup_{\mu \in A \cap \Prob_G(X)} h_\Si(G,X,\mu)$$
is immediate. 

To prove the opposite inequality, we may assume without loss of generality that $h_\Si(G,X,A) \ne -\infty$. Fix $\eps>0$. Given $i\in \N$, $U \subset G$, $\d>0$ and an open neighborhood $\cO$ of $A$ in $\Prob(X)$, let 
$$S(i,U,\d,\cO) \subset \Map(M_i,X,\rho:U,\d,\cO)$$
be a maximal $(\rho^{M_i},\eps)$-separated subset. Define
$$h^\eps_\Si(G,X,A) = \inf_{\cO} \inf_U \inf_{\d>0} \limsup_{i \to\infty} \vol(M_i)^{-1} \log |S(i,U,\d,\cO)|.$$
By definition, 
$$h_\Si(G,X,A) = \lim_{\eps \searrow 0} h^\eps_\Si(G,X,A).$$
Since we assume $h_\Si(G,X,A) \ne -\infty$, it follows that for all $U,\d,\cO$ the set $S(i,U,\d,\cO)$ is non-empty for infinitely many $i$. 

For each $\phi \in \Map(M_i,X)$, let $\d_{\phi_*\bvol} \in \Prob(\Prob(X))$ be the Dirac mass concentrated on the pushforward $\phi_*\bvol$. Let
$$Q(i,U,\d,\cO) = |S(i,U,\d,\cO)|^{-1} \sum_{\phi \in S(i,U,\d,\cO)} \d_{\phi_*\bvol} \in \Prob(\Prob(X))$$
whenever $S(i,U,\d,\cO)$ is non-empty. 

For each $U,\d,\cO$, choose an increasing sequence $\{n_i\}_{i=1}^\infty$ such that 
\begin{eqnarray}\label{E:limsoupy}
\limsup_{i \to\infty} \vol(M_i)^{-1} \log |S(i,U,\d,\cO)| = \lim_{i \to\infty} \vol(M_{n_i})^{-1} \log |S(n_i,U,\d,\cO)|.
\end{eqnarray}

Because $X$ is compact, both $\Prob(X)$ and $\Prob(\Prob(X))$ are compact in the weak* topology (by the Banach-Alaoglu Theorem). So after passing to a further subsequence if necessary, we may assume
$$\lim_{i\to\infty} Q(n_i,U,\d,\cO) = Q(U,\d,\cO)$$
for some measure $Q(U,\d,\cO) \in \Prob(\Prob(X))$. Let $Q_*$ be an accumulation point of $Q(U,\d,\cO)$ as $U \nearrow G$, $\d\searrow 0$ and $\cO \searrow A$. 

By Lemma \ref{lem:equidistribution}, $Q_*(\Prob_G(X)) = 1$. By construction, $Q_*( A)=1$. So the support of $Q_*$ is contained in $A \cap \Prob_G(X)$. Let $\mu$ be in the support of $Q_*$. It suffices to show $h^\eps_\Si(G,X,\mu) \ge h^\eps_\Si(G,X, A)$. 

Let $\cO_\mu$ be an open neighborhood of $\mu$ in $\Prob(X)$. Then $Q_*(\cO_\mu)>0$. Also fix $U_0, \d_0, \cO_0$. By the portmanteau Theorem, there exist $U, \d, \cO$ satisfying
\begin{itemize}
\item $U \supset U_0$, $0<\d\le \d_0$, $A \subset \cO \subset \cO_0$, and
\item $Q(U,\d,\cO)(\cO_\mu) \ge Q_*(\cO_\mu)/2 >0$. 
\end{itemize}
Let $\{n_i\}_{i=1}^\infty$ be the increasing sequence mentioned above for $U,\d,\cO$.  By the portmanteau Theorem again, there exists $I$ such that if $i>I$ then $Q(n_i,U,\d,\cO)(\cO_\mu) \ge Q_*(\cO_\mu)/4>0$. Since
$$\{\phi \in S(n_i,U,\d,\cO):~ \phi_*\bvol \in \cO_\mu\} = S(n_i,U,\d,\cO) \cap \Map(M_{n_i},X,\rho:U,\d,\cO\cap \cO_\mu),$$
this means
$$|S(n_i,U,\d,\cO) \cap \Map(M_{n_i},X,\rho:U,\d,\cO\cap \cO_\mu)| \ge |S(n_i,U,\d,\cO)|Q_*(\cO_\mu)/4.$$
Thus 
\begin{eqnarray*}
\sep_\eps(\Map(M_{n_i},X,\rho:U,\d, \cO_\mu)) &\ge& \sep_\eps(\Map(M_{n_i},X,\rho:U,\d,\cO\cap \cO_\mu))\\
& \ge& |S(n_i,U,\d,\cO)|Q_*(\cO_\mu)/4.
\end{eqnarray*}
Take the logarithm of both sides, divide by $\vol(M_{n_i})$ and take the limsup as $i\to\infty$ then the infimum over $U,\d,\cO$ to obtain
$$ \inf_{U \subset G} \inf_{\d > 0}\limsup_{i\to\infty} \vol(M_i)^{-1} \log \sep_\eps(\Map(M_i,X,\rho:U,\d,\cO_\mu)) \ge h^\eps_\Si(G,X, A).$$
Here we have used (\ref{E:limsoupy}). Next take the infimum over $\cO_\mu$ to obtain
 $h^\eps_\Si(G,X,\mu) \ge h^\eps_\Si(G,X, A)$. 

\end{proof}

\section{Open Questions}\label{S:oq}

\begin{enumerate}




\item If $G$ is amenable and unimodular then it is sofic \cite{MR4555895}. In this case, does sofic entropy agree with the spatial entropy of Ornstein-Weiss \cite{OW87}? 

\item Does Avni's approach to entropy \cite{avni-2010} through cross-sections generalize to sofic entropy?



\item Let $G$ be a locally compact group with a sofic approximation $\Si$. The {\bf base entropy of $\Si$} is the infimum of $h_\Si(G,X,\mu)$ over all essentially free pmp actions $G \cc (X,\mu)$. We denote it by $h_\Si(G)$. If $G$ is amenable then we also let $h(G)$ be the infimum of spatial entropies of essentially free pmp actions. If item (1) above has a positive answer, then $h(G)=h_\Si(G)$. Ornstein and Weiss gave examples where $h(G)>0$ and even where $h(G) = +\infty$ \cite{OW87}. If $t \ge h_\Si(G)$ then does there exist an essentially free ergodic pmp action $G \cc (X,\mu)$ with $\Si$-entropy equal to $t$? Is there a group-theoretic characterization of which groups $G$ admit a sofic approximation with $h_\Si(G)>0$ or with $h_\Si(G)=+\infty$? If $\Si_1,\Si_2$ are two sofic approximations then is $h_{\Si_1}(G) = h_{\Si_2}(G)$?

\item If $G$ is an lcsc group with a sofic approximation $\Si$ and $G \cc (X,\mu)$ is a pmp action then we say this action has {\bf completely positive entropy} (CPE) with respect to $\Si$ if every nontrivial factor has positive $\Si$-entropy. Do Poisson point processes have completely positive entropy (CPE) with respect to every sofic approximation? If $G$ is countable then the answer is yes by \cite{kerr-cpe, MR3996050}. 

\item Let $G$ be a locally compact sofic group. Is there a class $\cB$ of pmp actions of $G$ which have properties similar to Bernoulli shifts? Ideally, such a class would have the following properties.
\begin{enumerate}
\item For every number $t \in (h_\Si(G),\infty)$ there is exactly one pmp action in $\cB$ (up to measure conjugacy) with sofic entropy $t$. Moreover,  this entropy does not depend on the choice of sofic approximation. 
\item Each action in this class has CPE (completely positive entropy). This means every nontrivial factor of each action has positive entropy.
\item If $G$ is amenable and $\a_1,\a_2$ are two actions in $\cB$ and the entropy of $\a_1$ is at least the entropy of $\a_2$ then there exists a factor map from $\a_1$ to $\a_2$.
\item If $G$ is non-amenable then all of the actions in $\cB$ factor onto each other.
\item The class $\cB$ is closed under direct product.
\end{enumerate}
In the special case that $G$ is amenable and sofic entropy is replaced with spatial entropy, Ornstein-Weiss \cite{OW87} offer a positive answer: the finitely determined processes satisfy these conditions.

\item Gaussian analytic functions (GAFs) are random holomorphic functions on domains in $\C$ \cite{MR2552864}. We may choose the domain to be the unit disk. In that case, there exists a 1-parameter family of Gaussian analytic functions whose law is invariant under group of M\o bius transformations which preserve the disk. This group can be identified with $\textrm{PSL}(2,\R)$. So we have a 1-parameter family of $\textrm{PSL}(2,\R)$-invariant Borel probability measures on the space of holomorphic functions on the unit disk. What is the sofic entropy of these actions?


\item The Markovian triangulation is a random triangulation of the hyperbolic plane with $\textrm{PSL}(2,\R)$-invariant law \cite{MR3055763}. What is its sofic entropy? 


\end{enumerate}

\appendix

\section{Notation index}\label{S:ni}

\begin{itemize}
\item $G$ is an lcsc group with left-Haar measure $\Haar$, left-invariant proper metric $d_G$ and identity $1_G$;
\item $M$ or $M_i$ is a local $G$-space with canonical measure $\vol=\vol_M$;
\item $\Si=\{M_i\}_{i=1}^\infty$ is a sofic approximation to $G$;
\item $U \subset G$ usually represents an open pre-compact neighborhood of the identity;
\item $M[U] \subset M$ is the set of points $p$ which have `nice' $U$-neighborhoods as in Definition \ref{D:sofic group};  
\item $B(r)=B(1_G,r)$ is the open ball of radius $r$ centered at the identity in $G$.
\end{itemize}







\bibliography{biblio}
\bibliographystyle{alpha}

\end{document}